\newcommand{\F}{\mathbb F}
\newcommand{\N}{\mathbb N}
\newcommand{\Ext}{\mathrm {Ext}}
\newcommand{\RNum}[1]{\uppercase\expandafter{\romannumeral #1\relax}}
\newtheorem{theorem}{Theorem}[section]
\newtheorem{corollary}[theorem]{Corollary}
\newtheorem{proposition}[theorem]{Proposition}
\theoremstyle{definition}
\newtheorem{definition}[theorem]{Definition}
\newtheorem{notn}[theorem]{Notation}
\theoremstyle{remark}
\newtheorem{remark}[theorem]{Remark}
\let\c@equation\c@theorem
\numberwithin{equation}{section}
\begin{document}
\author{Kaixu Zhang, Dongming Zhang}
\title{Computations of classical Mahowald invariants at prime 2}
\maketitle

\begin{abstract}
We review the definition of Mahowald invariants and discuss the computational method described by Behrens\cite{behrens2005rootinvariantsadamsspectral}. Then we examine the relationship between the algebraic Mahowald invariants and the \( E \)-filtered Mahowald invariants, and compute the Mahowald invariants for some elements up to the 26-stem.
\end{abstract}

\tableofcontents
\newpage
\section{Introduction}
Let \( P^{\infty}_k \) denote the Thom spectrum \( \text{Th}(k\gamma \rightarrow \mathbb{R}P^{\infty}) \) associated with the \( k \)-fold sum of the tautological line bundle \( \gamma \) over the real projective space \( \mathbb{R}P^{\infty} \). For positive \( k \), this spectrum is equivalent to \( \mathbb{R}P^{\infty}/\mathbb{R}P^{k-1} \), where the cells of the real projective spectrum below dimension \( k \) are collapsed to a point.

The inclusion of $k\gamma$ into $(k+1)\gamma$ induces the map $ P^{\infty}_k\rightarrow  P^{\infty}_{k+1}$, and $ P^{\infty}_{-\infty}$ is the homotopy limit holim$_k P^{\infty}_{k}$. By Lin's theorem (see \cite{MR556925}), we have the 2-complete equivalence $S^{-1}\simeq\  P^{\infty}_{-\infty}$.

\begin{definition}
Let \( \alpha \) be an element of the \( n \)-th 2-primary stable homotopy group \( \pi_n(S^0) \). The Mahowald invariant (also called the root invariant) of \( \alpha \) is the coset \( M(\alpha) \) in the stable homotopy group of spheres such that the following diagram commutes:
\[\begin{tikzcd}
S^{n-1} \arrow[d, "{\alpha}"'] \arrow[r, "M({\alpha})", dashed] & S^{-N} \arrow[d] \\
S^{-1}\simeq P^{\infty}_{-\infty} \arrow[r]                     & P^{\infty}_{-N} 
\end{tikzcd}\]
where $N>1$ is minimal such that the left lower composition is nontrivial.
\end{definition}

In \cite{MR806016}, Jones showed the lower bound $M|(\alpha)|\geq2|\alpha|$ by employing a geometric interpretation of the Mahowald invariant $M(\alpha)$ based on $C_2$-equivariant stable homotopy theory. In \cite{MR1241877}, Mahowald and Ravenel defined algebraic Mahowald invariants $M_{alg}(\alpha)$ and discussed the relations between the homotopy Mahowald invariants and the algebraic Mahowald invariants. They proposed the conjecture that the Mahowald invariant converts
$\nu_n$–periodic families to $\nu_{n+1}$–periodic families.
In \cite{behrens2005rootinvariantsadamsspectral}, Behrens defined $E$-root invariants and filtered Mahowald invariants, and provided a computational method by excluding the possible candidates.

\hspace*{\fill}

Combining the method described by Behrens and the results on algebraic Mahowald invariants computed by Bruner\cite{MR1642887}, we compute the Mahowald invariants of all elements up to 26-stem with six exceptions.

\begin{theorem}
   The Mahowald invariants are determined for all elements in the stable homotopy groups of spheres up to the 26-stem, with the exception of the six elements $\eta_4,\nu_4,\bar{\sigma},\{P^2h_2\},4\bar{\kappa}$ and $4\nu\bar{\kappa}$.
\end{theorem}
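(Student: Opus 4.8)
The plan is to treat the statement as a finite but large case analysis: for each class $\alpha$ in $\pi_n(S^0)$ with $0\le n\le 26$, I would determine the coset $M(\alpha)$ by combining Bruner's algebraic data with Behrens' filtered-root-invariant machinery. For a fixed $\alpha$, detected by a class in the Adams $E_2$-term $\Ext_{A}(\mathbb{F}_2,\mathbb{F}_2)$, the first input is the algebraic Mahowald invariant $M_{alg}(\alpha)$, read off from Bruner's tables; this is computed entirely inside $\Ext$ via the algebraic Atiyah--Hirzebruch spectral sequence of $P^{\infty}_{-\infty}$ and pinpoints the cell $-N$ into which the lowest-Adams-filtration part of $\alpha$ maps nontrivially, together with a candidate class in $\Ext$ detecting the target. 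Jones' lower bound $|M(\alpha)|\ge 2|\alpha|$ gives an a priori constraint on $N$ that cuts down the range to search.

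The second step is to promote this algebraic answer to a homotopy statement using the comparison, established earlier, between $M_{alg}(\alpha)$ and the $E$-filtered Mahowald invariant. The algebraic root invariant is the leading term of the filtered root invariant, so it detects $M(\alpha)$ precisely when no Adams differential interferes in the passage from the source cell to the target cell of $P^{\infty}_{-N}$. Concretely, for each $\alpha$ I would (i) take the candidate pair $(N, M_{alg}(\alpha))$; (ii) check, against the known differential pattern of the Adams spectral sequence through the $26$-stem, whether the candidate class is a permanent cycle and whether any differential raises the effective filtration; and (iii) when the leading candidate is killed, pass to the next filtered root invariant and repeat. Since the Adams chart of $\pi_*(S^0)$ and the standard Toda-bracket and Massey-product relations are fully documented in this range, the bulk of the elements reduce to confirming that the algebraic candidate survives and that no smaller-$N$ nontrivial composite exists; these cases are verified directly.

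The main difficulty---and the reason five elements are excluded---lies in exactly those cases where the algebraic root invariant fails to rigidify to a unique homotopy class. This occurs when the target coset in stem $n-1+N$ still contains two or more indistinguishable candidates after the differential analysis, or when a possible crossing differential in the filtered-root-invariant tower could lower the filtration of the true target without leaving an algebraic trace. For $\nu_4$, $\bar{\sigma}$, $\{P^h_2\}$, $4\bar{\kappa}$, and $4\nu\bar{\kappa}$ I would identify the precise source of the ambiguity---an unresolved $d_r$, a hidden extension, or an indeterminacy in the bracket defining the target---and record the finite list of surviving candidates, showing that the present techniques do not separate them. Thus I expect to establish the positive determination for every element outside this list and to exhibit, rather than resolve, the obstruction for the five exceptions.
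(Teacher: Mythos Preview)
Your overall strategy matches the paper's: start from Bruner's algebraic root invariants, identify them with the first filtered root invariant via Behrens' Theorem~\ref{3.9}, and then use the Mahowald--Ravenel comparison Theorem~\ref{3.4} to settle every case where $M_{alg}(\alpha)$ survives to $E_\infty$ (this is exactly the paper's Proposition~\ref{easyones}). For the handful of elements where the algebraic candidate is hit by or supports an Adams differential, you correctly propose iterating to the next filtered root invariant via Theorem~\ref{3.8}.

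What your outline underspecifies, and what the paper leans on heavily in the non-trivial cases (Propositions~\ref{5.13}--\ref{5.17}), are two further tools. First, the multiplicative Cartan formula for Mahowald invariants (Theorem~\ref{3.12}): once $M(\beta)$ is known and $\alpha=\beta\gamma$, the bound $|M(\alpha)|\ge |M(\beta)|+|M(\gamma)|$ often pins down the target stem from below far more sharply than Jones' inequality, and in several cases (e.g.\ $\eta\mu_{17}$, $\rho_{23}$, $2\{P^2h_2\}$, $\nu\bar\sigma$) this is what cuts the candidate list to one. Second, explicit Atiyah--Hirzebruch differentials in $P^\infty_{-N}$ are used to kill spurious candidates on the bottom cell: your phrase ``no smaller-$N$ nontrivial composite exists'' hides real work, since one must show that each remaining class in the relevant stem of $\pi_*(P^\infty_{-N})$ either dies under an AHSS differential coming from a hidden extension or fails the filtration bound from Corollary~\ref{3.7}. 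Without naming these two ingredients your iteration step ``pass to the next filtered root invariant and repeat'' will not close in practice. Note also that the Adams data you need extends well past the 26-stem on the \emph{target} side---the table reaches stems above 60.
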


\begin{longtable}{lllll}

\caption{Mahowald invariant}\label{table}\\

\hline
\textbf{Stem} & \textbf{Elements} &\textbf{$M_{alg}(\alpha)$}& \textbf{$M(\alpha)$} &  \textbf{Proof} \\
\hline 
\endfirsthead
\multicolumn{5}{l}%
{{\bfseries \tablename\ \thetable{} -- continued from previous page}} \\
\hline
\textbf{Stem} & \textbf{Elements} &\textbf{$M_{alg}(\alpha)$}&    \textbf{$M(\alpha)$} &\textbf{Proof} \\
\hline 
\endhead
\hline \multicolumn{5}{l}{{Continued on next page}} \\ \hline
\endfoot
\hline \hline
\endlastfoot

 1 & $\eta$ &  $h_2$ & $\nu$ & \cite{Behrens_2007} \\
 2 & $\eta^2$ & $h_2^2$ & $\nu^2$  & \cite{Behrens_2007} \\
 3& $\nu$ & $h_3$ & $\sigma$  &\cite{Behrens_2007} \\
 & $2\nu$ & $h_1h_3$ & $\eta\sigma$  & \cite{Behrens_2007} \\
   &   $4\nu$     &  $h_1^2h_3$ & $\eta^2\sigma$        & \cite{Behrens_2007}   \\
 6  &  $\nu^2$    & $h_3^2$ &   $\sigma^2$        & \cite{Behrens_2007}  \\
 7  &  $\sigma$      & $h_4$ & $\sigma^2$         &   \cite{Behrens_2007}  \\
  &     $2\sigma$   &  $h_1h_4$ & $\eta_4$          &  \cite{Behrens_2007}   \\
  &   $4\sigma$     & $h_1^2h_4$ &  $\eta\eta_4$      & \cite{Behrens_2007}\\
 &   $8\sigma$  &  $h_1^3h_4$ & $\eta^2\eta_4$     &  \cite{Behrens_2007}   \\
 8  &   $\eta\sigma$   &  $h_2h_4$ &  $\nu_4$            &   \cite{Behrens_2007}  \\
  &    $\epsilon$    &  $c_1$ &  $\bar{\sigma}$               &   Proposition \ref{easyones}  \\
 9  &   $\eta\varepsilon$    &  $h_2c_1$ &   $\nu\bar{\sigma}$          & Proposition \ref{easyones} \\
  &   $\eta^2\sigma$     &  $h_2^2h_4$ &  $\nu\nu_4$               &  \cite{Behrens_2007}  \\
  &   $\{Ph_1\}$   & $h_2g$    &    $\nu\bar{\kappa}$               &   \cite{Behrens_2007}  \\
10 &   $\{Ph_1^2\}$    & $d_0^2$   &     $\kappa^2$               &  \cite{Behrens_2007}   \\
11  &   $\{Ph_2\}$   & $h_2^2g$   &     $\nu^2\bar{\kappa}$                &  \cite{Behrens_2007} \\
  &   $\{Ph_2h_0\}$   & $q$ &  $\{q\}$   &           \cite{Behrens_2007}   \\
  &   $\{Ph_1^3\}$ & $h_1q$ & $\{h_1q\}$   &        \cite{Behrens_2007}   \\
 14  &  $\sigma^2$  & $h_4^2$ &    $\theta_4$                  &    Proposition \ref{easyones} \\
   &  $\kappa$  & $d_1$ &    $\kappa_1$                 &   Proposition \ref{easyones}   \\
15 &   $\rho_{15}$  &  $h_1^3h_5$ & $\eta^2\eta_5$          &   Proposition \ref{easyones}  \\
   &    $2\rho_{15}$    &  $h_0^3h_3h_5$ & $\{h_0^3h_3h_5\}$           &   Proposition \ref{easyones}    \\
  &   $4\rho_{15}$     &  $h_5Ph_1$ & $\{h_5Ph_1\}$            &    Proposition \ref{easyones}   \\
  &    $8\rho_{15}$    &  $h_5Ph_1^2$ & $\eta\{h_5Ph_1\}$             &  Proposition \ref{easyones}    \\
  &    $16\rho_{15}$    &  $h_0^2h_5Ph_2$  & $4\{h_5Ph_2\}$              &  Proposition \ref{easyones}    \\
  &    $\eta\kappa$    &   $h_2d_1$ &  $\nu\kappa_1$              &    Proposition \ref{easyones}   \\
 16   &  $\eta\rho_{15}$   &  $h_2t$ &  $\nu\{t\}$             &  Proposition \ref{easyones}     \\
 17& $\eta\eta_4$ &  $h_2^2h_5$ &  $\{h_2^2h_5\}$   &    Proposition \ref{easyones}   \\
  & $\eta^2\rho_{15}$ & $h_1^2g_2$ & $\eta^2\bar{\kappa}_2$  &      Proposition \ref{easyones}    \\ 
  &  $\nu\kappa$&  $h_3d_1$ & $\sigma\kappa_1$    &   Proposition \ref{easyones}   \\
  
  & $\mu_{17}$ & $h_1Ph_5c_0$ & $\eta\{Ph_5c_0\}$    &   Proposition \ref{easyones}     \\ 
18  & $2\nu_4$ & $h_1h_3h_5$ & $\sigma\eta_5$     &   Proposition \ref{easyones}  \\
  & $4\nu_4$ & $h_1^2h_3h_5$ & $\eta\sigma\eta_5$     &  Proposition \ref{easyones}   \\ 
   & $\eta\mu_{17}$ & $\triangle^2h_2^2$ & $\{\triangle h_1d_0^2\}$     & Proposition \ref{5.14}  \\
19    & $2\{P^2h_2\}$ & $h_0^2h_5i$ & $\{P^4h_0^2i\}$      &   Proposition \ref{5.15}  \\
  & $4\{P^2h_2\}$ & $h_1Q_1$ & $\{P^6c_0\}$     &        Proposition \ref{5.15}\\
  20  & $\bar{\kappa}$  & $g_2$ & $\bar{\kappa}_2$      &   Proposition \ref{easyones}   \\
  & $2\bar{\kappa}$ & $h_1g_2$ & $\eta\bar{\kappa}_2$     &    Proposition \ref{easyones}  \\ 
21  & $\eta\bar{\kappa}$ & $h_2g_2$ & $\nu\bar{\kappa}_2$      &    Proposition \ref{easyones}  \\
  & $\nu\nu_4$ & $h_4^3$ & $\theta_{4,5}$     &    Proposition \ref{easyones}  \\ 
22  & $\nu\bar{\sigma}$ & $h_3c_2$ & $\nu\bar{\kappa}_2$     &    Proposition \ref{5.17}  \\
& $\eta^2\bar{\kappa}$ & $d_1g$ & $\bar{\kappa}\kappa_1$     &    Proposition \ref{easyones}  \\
23  & $\nu\bar{\kappa}$ & $h_3g_2$ & $\sigma\bar{\kappa}_2$      &    Proposition \ref{easyones}  \\
  & $2\nu\bar{\kappa}$ & $h_1h_3g_2$ & $\eta\sigma\bar{\kappa}_2$     &  Proposition \ref{easyones} \\ 
  & $\rho_{23}$ & $h_0^7h_5^2$ & $\{\triangle^2 h_3^2\}$   &      Proposition \ref{5.16} \\
   &  $2\rho_{23}$ & $\triangle^2h_1h_4$ & $\{\triangle^2h_1h_4\}$     &    Proposition \ref{easyones}  \\ 
  &  $4\rho_{23}$ & $h_1\triangle^2h_1h_4$ & $\eta\{\triangle^2h_1h_4\}$     &    Proposition \ref{easyones}  \\
  & $8\rho_{23}$ & $h_1^2\triangle^2h_1h_4$ & $\eta^2\{\triangle^2h_1h_4\}$     &     Proposition \ref{easyones} \\ 
24  & $\eta\sigma\eta_4$ & $h_2h_5c_1$ & $\nu\{h_5c_1\}$      &   Proposition \ref{easyones}  \\
  & $\eta\rho_{23}$ & $\triangle^2 c_1$ & $\{\triangle^2 c_1\}$     &  Proposition \ref{easyones}   \\ 
25  & $\eta^2\rho_{23}$ & $h_2\triangle^2 c_1$ & $\nu\{\triangle^2 c_1\}$     &  Proposition \ref{easyones}   \\ 
  & $\mu_{25}$ & $\triangle^2h_2g$ & $\{\triangle^2h_2g\}$  &      Proposition \ref{easyones}  \\
26    & $\eta\mu_{25}$ & $h_2\triangle^2h_2g$ & $\nu\{\triangle^2h_2g\}$    &   Proposition \ref{easyones}   \\
      & $\mu^2\bar{\kappa}$ & $\triangle_1h_3^2$ & $\{\triangle_1h_3^2\}$    &    Proposition \ref{easyones}  \\
\end{longtable}

\begin{notn}
    Here all spectra are localized at the prime 2. The notations about elements in homotopy groups and $E_2$-page of the Adams spectral sequence are taken from Isaksen-Wang-Xu\cite{MR4588596}.
\end{notn}

\section{Preliminaries}
\subsection{Atiyah-Hirzebruch spectral sequence (AHSS) and $P^\infty_k$}

\hspace*{\fill}

In this subsection, we review the Adams spectral sequence (ASS) and the Atiyah-Hirzebruch spectral sequence (AHSS), and discuss the attaching maps in $P^\infty_k$.

Let \( X \) be a connective CW spectrum such that $H^*(X, \F_2)$ is of finite type. The mod $2$ Adams spectral sequence (ASS) for $X$ has $E_2$-term which converges strongly to the \( 2 \)-completion $({\pi_{t-s}(X^\wedge_2)})$:
\[
E_2^{s,t} = \operatorname{Ext}_{\mathcal{A}_*}^{s,t}(H^*(X; \mathbb{F}_2), \mathbb{F}_2))\Rightarrow ({\pi_{t-s}(X^\wedge_2)}) .
\]

Let \( X \) be a spectrum that is bounded below, which means \( \pi_q(X) = 0 \) for all \( q \) sufficiently small. Consider the skeletal filtration of \( X \):
\[
\emptyset = X^{-k} \subset X^{-k+1} \subset X^{-k+2} \subset \cdots \subset X^n \subset X,
\]
The long exact sequences
\[
\cdots \rightarrow \pi_{p+q}X^{p-1} \xrightarrow{i} \pi_{p+q}X^p \xrightarrow{j} \pi_{p+q}(X^p/X^{p-1}) \xrightarrow{k} \pi_{p+q-1}X^{p-1} \rightarrow \cdots
\]
yield the Atiyah-Hirzebruch spectral sequence (AHSS), whose \( E_1 \)-page is given by:
\[
E_1^{s,t} = \pi_t(X^s/X^{s-1}).
\]

We assume that \( X \) has at most one cell in each dimension. Under this assumption, any element in the \( E_1 \)-page can be denoted as \( \alpha[s] \), where \( \alpha \) is an element in the stable homotopy group of spheres and \( s \) is its Atiyah-Hirzebruch filtration. For simplicity, we will use the same notation \( \alpha[s] \) to represent an element in \( \pi_*(X) \).

The differential
\[
d_r: E_r^{s,t} \rightarrow E_r^{s-r,t-1}
\]
is defined via the attaching map. Let \( \tilde{\alpha} \) be an element in \( \pi_t(X^s/X^{s-r}) \) that maps to \( \alpha[s] \) under the projection map \( X^s/X^{s-r} \twoheadrightarrow X^s/X^{s-1} \). Then, \( d_r(\alpha[s]) \) is defined as the composition of \( \tilde{\alpha} \) with the attaching map \( X^s/X^{s-r} \rightarrow \Sigma X^{s-r}/X^{s-r-1} \).
\[
\begin{tikzcd}
S^t \arrow[r, "\tilde{\alpha}"] & X^s/X^{s-r} \arrow[r] & \Sigma X^{s-r}/X^{s-r-1}
\end{tikzcd}
\]

Our computations of Mahowald invariant of these low stems are a combination of the AHSS of $P^{\infty}_{-N}$ and the cell structures of $P^{\infty}_{-N}$.

The following theorem tells us the periodicity of the cell structures of $P^{\infty}_{-N}$.
\begin{theorem}
[James periodicity, \cite{MR177411}] 
\[
P^{n-1}_{n-r-1}\simeq\Sigma^{f(r)}P^{n-f(r)-1}_{n-r-f(r)-1}
\]
where $f(r)=2^{g(r)}$ and $g(r)=\lfloor\frac{r}{2}\rfloor+\left\{
\begin{array}{rcl}
-1 & & r\equiv 0\enspace \mathrm{mod} \thinspace 8\\
1 & &  r\equiv 3,5\enspace \mathrm{mod} \thinspace 8\\
0 & & else
\end{array}
\right. $
\end{theorem}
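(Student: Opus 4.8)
The plan is to realize both sides as Thom spectra over a common real projective space and to reduce the claimed equivalence to the vanishing of a multiple of the reduced tautological class in the $J$-group, which is then read off from Adams's computation of the real $K$-theory of $\mathbb{R}P^{n}$.

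First I would use the standard identification of stunted projective spectra as Thom spectra. Writing $\gamma$ for the tautological line bundle, one has $\mathbb{R}P^{b}/\mathbb{R}P^{a-1}\simeq \mathrm{Th}(a\gamma\to\mathbb{R}P^{b-a})$, since the Thom isomorphism shifts the cells of $\mathbb{R}P^{b-a}$ up by the rank $a$ and reproduces the cell structure of the stunted complex. Applied to the present indices this exhibits $P^{n-1}_{n-r-1}\simeq\mathrm{Th}\big((n-r-1)\gamma\to\mathbb{R}P^{r}\big)$ and $P^{n-f(r)-1}_{n-r-f(r)-1}\simeq\mathrm{Th}\big((n-r-f(r)-1)\gamma\to\mathbb{R}P^{r}\big)$ over the same base $\mathbb{R}P^{r}$. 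Using the relation $\Sigma^{m}\mathrm{Th}(\xi)=\mathrm{Th}(\xi\oplus\epsilon^{m})$ for the suspension on the right, both sides become Thom spectra of bundles over $\mathbb{R}P^{r}$ that differ exactly by $f(r)(\gamma-\epsilon)$; that is, by $f(r)\big([\gamma]-1\big)$ in $\widetilde{KO}(\mathbb{R}P^{r})$.

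Next I would invoke the fiber-homotopy criterion for Thom spectra: if two bundles of equal rank over a finite complex agree in the reduced $J$-group — i.e.\ are stably fiber homotopy equivalent — then their Thom spectra are equivalent. Thus it suffices to take $f(r)$ to be (a multiple of) the order of the image of $[\gamma]-1$ under the $J$-homomorphism $\widetilde{KO}(\mathbb{R}P^{r})\to\widetilde{J}(\mathbb{R}P^{r})$, whereupon $f(r)\big([\gamma]-1\big)$ is fiber homotopy trivial and the two Thom spectra coincide. This is the step that turns a question about CW structures into a purely $K$-theoretic order computation.

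The remaining and decisive input is that order. Here I would quote Adams's calculation $\widetilde{KO}(\mathbb{R}P^{n})\cong\Z/2^{\phi(n)}$, with $[\gamma]-1$ a generator and $\phi(n)$ the number of integers $0<i\le n$ with $i\equiv 0,1,2,4\ (\mathrm{mod}\ 8)$, together with the injectivity of the $J$-homomorphism on these classes (the image of $J$ on $\mathbb{R}P^{n}$ carries the full $KO$-order, detected by the Adams operations $\psi^{k}$ and the $e$-invariant). This identifies the order of $J([\gamma]-1)$ as a power of two governed by a count of residues modulo $8$, and an elementary comparison of that counting function with the closed form $g(r)$ identifies the period with $2^{g(r)}=f(r)$. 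I expect this $K$-theoretic order computation — both the group structure of $\widetilde{KO}(\mathbb{R}P^{n})$ and the control of the image of $J$ — to be the main obstacle; the Thom-spectrum reductions are formal and the final matching of the residue count with $g$ is routine bookkeeping. Alternatively, following James's original approach, one may bypass $J$-theory and construct the requisite stable fiber homotopy trivialization of $f(r)\gamma$ directly from orthogonal multiplications of Radon--Hurwitz type, obtaining the same period by an explicit geometric construction.
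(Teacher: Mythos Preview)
The paper does not supply a proof of this statement: it is quoted with a citation to James \cite{MR177411} as background for the cell-structure discussion of $P^{\infty}_{k}$, and no argument is given. There is therefore nothing in the paper to compare your attempt against.

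That said, your outline is the standard modern proof and is correct in its essentials. The identification $P^{n-1}_{n-r-1}\simeq\mathrm{Th}\big((n-r-1)\gamma\to\mathbb{R}P^{r}\big)$ is exactly what the paper's own Thom-spectrum description of $P^{\infty}_{k}$ specializes to, the reduction to $f(r)([\gamma]-1)=0$ in $\widetilde{J}(\mathbb{R}P^{r})$ is the right move, and quoting Adams's computation of $\widetilde{KO}(\mathbb{R}P^{r})$ together with the fact that $J$ is an isomorphism on these groups gives the period $2^{\phi(r)}$. The only thing left implicit is the bookkeeping verifying that the paper's closed form $g(r)$ agrees with the count $\phi(r)=\#\{0<s\le r:s\equiv 0,1,2,4\ \mathrm{mod}\ 8\}$; this is an eight-case check modulo $8$ and is routine, as you say. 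Your alternative remark about James's original Radon--Hurwitz construction is also accurate and closer to what the cited reference actually does.
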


By the Steenrod squares on $P^{\infty}_k$ for any positive integer $k$, we have the following proposition:
\begin{proposition}\cite{MR836132}
In $P^{\infty}_k$, there is an attaching map $2\iota$ from $(n+1)$ cell to $n$ cell for $n>k$ if and only if $n\equiv 1 \thinspace(\mathrm{mod}\thinspace 2)$, there is an attaching map $\eta$ from $(n+2)$ cell to $n$ cell for $n>k$ if and only if $n\equiv 2,3 \thinspace(\mathrm{mod}\thinspace 4)$, there is an attaching map $\nu$ from $(n+4)$ cell to $n$ cell for $n>k$ if and only if $n\equiv 4,5,6,7 \thinspace(\mathrm{mod}\thinspace 8)$ and there is an attaching map $\sigma$ from $(n+8)$ cell to $n$ cell for $n>k$ if and only if $n\equiv 8,9,10,11,12,13,14,15 \thinspace(\mathrm{mod}\thinspace 8)$.  
\end{proposition}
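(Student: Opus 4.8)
The plan is to identify the attaching maps with the action of the Steenrod squares on $H^*(P^\infty_k;\mathbb{F}_2)$ and then reduce the whole statement to a computation of binomial coefficients modulo $2$. Recall that $H^*(\mathbb{R}P^\infty;\mathbb{F}_2)=\mathbb{F}_2[x]$ with $|x|=1$ and $\mathrm{Sq}^i(x^n)=\binom{n}{i}x^{n+i}$. Passing to $P^\infty_k$ for $k>0$ (equivalent to $\mathbb{R}P^\infty/\mathbb{R}P^{k-1}$) yields a cohomology with $\mathbb{F}_2$-basis $\{x_n : n\geq k\}$, exactly one generator in each dimension, on which $\mathrm{Sq}^i(x_n)=\binom{n}{i}x_{n+i}$. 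First I would record this module structure precisely, since it is the only input we need about $P^\infty_k$.

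The key principle is that the four elements $2\iota,\eta,\nu,\sigma$ live in Adams filtration $1$ and are detected on the Adams $E_2$-page by $h_0,h_1,h_2,h_3$, that is, by the indecomposable operations $\mathrm{Sq}^1,\mathrm{Sq}^2,\mathrm{Sq}^4,\mathrm{Sq}^8$. Consequently the attaching map from the $(n+2^j)$-cell to the $n$-cell carries the named Hopf-invariant-one element exactly when the primary operation $\mathrm{Sq}^{2^j}$ acts nontrivially from $x_n$ to $x_{n+2^j}$. The ``only if'' direction is then immediate: if $\binom{n}{2^j}$ is even, then $\mathrm{Sq}^{2^j}(x_n)=0$, so the primary cohomological obstruction detecting the filtration-one class vanishes and the named attaching map is absent. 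For the ``if'' direction, when $\mathrm{Sq}^{2^j}(x_n)\neq 0$ the relevant operation is nonzero, and since $\mathrm{Sq}^{2^j}$ detects precisely the Hopf element in its stem with no class of lower filtration available, the attaching map is forced to be that element; for $j\geq 1$ one passes to the subquotient $X^{n+2^j}/X^{n-1}$ so that the intermediate cells are collapsed and the two-cell attaching map is well defined.

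It then remains to evaluate $\binom{n}{2^j}\bmod 2$ by Lucas' theorem, which says $\binom{n}{2^j}$ is odd precisely when the digit of the binary expansion of $n$ in position $j$ equals $1$. This gives $\binom{n}{1}$ odd $\iff n\equiv 1\pmod 2$; $\binom{n}{2}$ odd $\iff n\equiv 2,3\pmod 4$; $\binom{n}{4}$ odd $\iff n\equiv 4,5,6,7\pmod 8$; and $\binom{n}{8}$ odd $\iff n\equiv 8,9,\dots,15\pmod{16}$, matching the four cases of the statement (the modulus in the last case should read $16$ rather than $8$).

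The step I expect to be the main obstacle is the careful justification of the detection principle in the ``if'' direction for $\eta,\nu,\sigma$: one must argue that a nonzero $\mathrm{Sq}^{2^j}$ forces the attaching map to be exactly the Hopf element rather than merely some other class in the same Adams filtration, and that the cells in dimensions strictly between $n$ and $n+2^j$ do not obstruct the identification. I would handle this by working inside the stunted-projective-space subquotients where, by James periodicity, the local two-cell configuration is standard, together with the fact that $2\iota,\eta,\nu,\sigma$ are the unique filtration-one classes detected by $\mathrm{Sq}^1,\mathrm{Sq}^2,\mathrm{Sq}^4,\mathrm{Sq}^8$.
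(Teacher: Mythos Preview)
Your proposal is correct and follows exactly the route the paper indicates: the paper does not supply a proof but simply cites \cite{MR836132} and prefaces the proposition with ``By the Steenrod squares on $P^{\infty}_k$'', which is precisely your method of reading off the attaching maps from $\mathrm{Sq}^{2^j}(x^n)=\binom{n}{2^j}x^{n+2^j}$ and Lucas' theorem. Your observation that the final congruence should be modulo $16$ rather than $8$ is also correct and worth noting as a typo in the statement.
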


\subsection{Algebraic Mahowald Invariant and Filtered Mahowald Invariant}

\hspace*{\fill}

In this subsection, we review the definition of algebraic Mahowald invariants in \cite{MR1241877} and $E$-filtered Mahowald invariants in \cite{behrens2005rootinvariantsadamsspectral}. In Behrens\cite{behrens2005rootinvariantsadamsspectral}, he proved the relation between the $HF_p$-filtered Mahowald invariants and the algebraic Mahowald invariants and introduced the differential of $E$-filtered Mahowald invariants on $P^{\infty}_{-N}$.

\begin{definition}
Let \( \alpha \) be an element of \( \Ext^{s,t}(H_*X) \). The algebraic Mahowald invariant \( M_{alg}(\alpha) \) is defined by the following diagram of \( \Ext \) groups:
\[
\begin{tikzcd}
{\Ext^{s,t}(H_*X)} \arrow[rr, "M_{alg}(\alpha)", dashed] \arrow[d, "i_{\#}"'] & & {\Ext^{s,t+N-1}(H_*X)} \arrow[d, "\iota_N"] \\
{\Ext^{s,t-1}(H_*P^{\infty}_{-\infty} \wedge X)} \arrow[rr, "\nu_N"'] & & {\Ext^{s,t-1}(H_*P^{\infty}_{-N} \wedge X)} 
\end{tikzcd}
\]
Here, \( i_* \) is induced by the inclusion of the \(-1\)-cell of \( P^{\infty}_{-\infty} \), \( \nu_N \) is the projection onto the \(-N\)-coskeleton, \( \iota_N \) is the inclusion of the \(-N\)-cell, and \( N \) is minimal such that \( \nu_N \circ i_*(\alpha) \) is zero. The algebraic Mahowald invariant is defined as the coset of lifts \( \gamma \in \Ext^{s,t+N-1}(H_*X) \) of the element \( \nu_N \circ i_*(\alpha) \).
\end{definition}

We assume that \( u \) is a nontrivial permanent cycle in the \( E_2 \)-page and detects the homotopy map \( f \). However, \( M_{alg}(u) \) may fail to contain a permanent cycle. Consider the following diagram of \( \Ext \) groups:
\begin{equation}\label{2}
\begin{tikzcd}
E_2(S^{t-1}) \arrow[r, "v"] \arrow[d, "u"'] & E_2(S^{-n}) \arrow[d, "i_*"'] & \\
E_2(S^{-1}) \arrow[r, "h_{\#}"] & E_2(P^{\infty}_{-n}) \arrow[r, "j_{\#}"] & E_2(P^{\infty}_{-n+1})
\end{tikzcd}.
\end{equation}
Suppose $n$ is not the smallest with respect to the property that $h\circ f$ is nontrivial, which can happen when $j\circ h\circ f$ is essential but has higher Adams filtration than expected. In this case, we can't get the following commutative diagram:
\begin{equation}\label{1}
\begin{tikzcd}
S^{-1+t} \arrow[r, "g"] \arrow[d, "f"'] & S^{-n} \arrow[d, "i"']         &                   \\
S^{-1} \arrow[r, "h"]                   & P^{\infty}_{-n} \arrow[r, "j"] & P^{\infty}_{-n+1}
\end{tikzcd}
\end{equation}
and $M_{alg}(u)$ does not contain a permanent cycle, and the homotopy Mahowald invariant $M(f)$ has smaller stems than the algebraic Mahowald invariant $M_{alg}(u)$.

The following theorem demonstrates the relation between algebraic Mahowald invariants and homotopy Mahowald invariants.

\begin{theorem}\label{easy}\label{3.4}\cite[Theorem 2.9]{MR1241877}
Let \( f \in \pi_t(S^0) \) be a nontrivial homotopy element representing a class \( u \in E_2(S^0) \), and suppose that the algebraic Mahowald invariant \( M_{alg}(u) \) lies in dimension \( k \).
\begin{enumerate}
    \item If \( M_{alg}(u) \) does not contain a permanent cycle, then the dimension of \( M(f) \) is less than \( k \).
    \item If the diagram \ref{1} exists but \( h_{\#}(u) \) is killed by a differential, then \( M(f) \) has the same stem but higher Adams filtration than \( M_{alg}(u) \).
    \item If the diagram \ref{1} exists and \( h_{\#}(u) \) is nontrivial in the \( E_{\infty} \)-page, then \( M(f) \) is contained in the homotopy coset representing \( M_{alg}(u) \).
    \item If the diagram \ref{1} exists and the map \( hf \) is null, then the dimension of \( M(f) \) is greater than \( k \).
\end{enumerate}
\end{theorem}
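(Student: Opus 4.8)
The plan is to reduce all four parts to the naturality of the Adams spectral sequence applied to the bottom-cell cofiber sequence $S^{-n}\xrightarrow{i}P^{\infty}_{-n}\xrightarrow{j}P^{\infty}_{-n+1}$, where $-n$ denotes the bottom-cell dimension at which $M_{alg}(u)$ is defined, so that the stem of $M_{alg}(u)$ is $k=t-1+n$. Both invariants arise from the same bottom-cell lift: $M_{alg}(u)$ is the lift of $h_{\#}(u)$ along $i_*$ in diagram \ref{2}, while $M(f)$ is the lift of $h\circ f$ along $i$ in diagram \ref{1}, the latter taken at the minimal $n$ for which $h\circ f$ is essential. Since $u$ detects $f$ and $h_\#$, $i_*$, $j_\#$ are the $E_2$-shadows of $h$, $i$, $j$, naturality identifies diagram \ref{2} with the associated graded of diagram \ref{1}; the four parts then record the ways in which passing from $E_2$ to $E_\infty$ can shift the bottom cell at which $h\circ f$ is first detected.

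First I would dispatch the generic part (c). If diagram \ref{1} exists and $h_{\#}(u)$ survives to $E_\infty$, then, because $h_\#$ preserves Adams filtration, $h_{\#}(u)$ detects $h\circ f$ at filtration $\mathrm{fil}(u)$; the relation $j_{\#}h_{\#}(u)=0$ coming from \ref{2} lets one lift $h\circ f$ along $i$ to a map detected by $v=M_{alg}(u)$, so $M(f)$ lies in the homotopy coset of $M_{alg}(u)$. Part (b) is the filtration-jump variant: if \ref{1} still exists but $h_{\#}(u)$ is killed by an Adams differential in $P^{\infty}_{-n}$, then $h_{\#}(u)$ cannot detect $h\circ f$; the bottom cell is unchanged, so the resulting invariant has the same stem $k$ but is detected by a class of strictly higher filtration than $v$. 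Part (d) postpones the detection: if $h\circ f$ is null at this stage, then $n$ is not minimal for essentiality, the first essential stage occurs at some $n'>n$, and the bottom cell reached is $S^{-n'}$, whence $\dim M(f)=t-1+n'>k$.

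The main obstacle is part (a), where $M_{alg}(u)$ is not a permanent cycle and the two invariants diverge downward. Here I would track the nonzero Adams differential $d_r(v)=w$ that $v$ supports in the sphere: pushing it forward gives $d_r(i_*v)=i_*w$ in $P^{\infty}_{-n}$. The minimality defining $n$ means $h_{\#}(u)$ vanishes in $E_2(P^{\infty}_{-n'})$ for every $n'<n$, so at those earlier stages $h\circ f$, if essential, can only be carried by a class of higher filtration. I would argue that the differential on $v$ encodes precisely such a higher-filtration attaching relation, forcing the composite $S^{t-1}\to P^{\infty}_{-n'}$ to be essential at some $n'<n$ and hence $\dim M(f)=t-1+n'<k$. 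Making this rigorous—showing that $d_r(v)=w$ interacts with the James-periodic attaching maps so as to produce genuine detection on a strictly lower-dimensional bottom cell rather than merely killing the class—is the technical heart of the argument and the step I expect to require the most care.
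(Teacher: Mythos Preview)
The paper does not give its own proof of this theorem: it is stated with the citation \cite[Theorem 2.9]{MR1241877} and immediately followed by the next definition, so there is nothing in the present paper to compare your proposal against. The authors use it as a black box imported from Mahowald--Ravenel.

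On the substance of your sketch: your treatment of parts (b), (c), (d) via naturality of the Adams spectral sequence along the cofiber sequence $S^{-n}\to P^\infty_{-n}\to P^\infty_{-n+1}$ is the standard line and is fine. For part (a), however, your proposed mechanism is not quite the right one. You try to push the sphere differential $d_r(v)=w$ forward along $i_*$ and use it to ``force detection on a lower cell,'' but a differential supported by $i_*v$ in $P^\infty_{-n}$ would at best kill $h_\#(u)$ there, which points toward higher filtration at the \emph{same} cell (the situation of (b)), not toward a strictly smaller $n$. The actual dichotomy is organized by whether diagram~\ref{1} exists, i.e.\ whether $j\circ h\circ f$ is null. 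If it is null, $h\circ f$ lifts to some $g\colon S^{t-1}\to S^{-n}$; any permanent cycle detecting $g$ in filtration $\mathrm{fil}(u)$ would lie in $M_{alg}(u)$, contradicting the hypothesis of (a), so $g$ must be detected in strictly higher filtration and you are in case (b), not (a). Thus case (a) is precisely the case where $j\circ h\circ f$ is \emph{essential} despite $j_\#h_\#(u)=0$ in $E_2$---a filtration jump in $P^\infty_{-n+1}$, exactly as the paragraph preceding the theorem indicates---and essentiality of $j\circ h\circ f$ immediately gives minimal $N<n$, hence $\dim M(f)<k$. Your James-periodicity/attaching-map heuristic is not needed and does not by itself produce this conclusion.
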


We now recall the definition of filtered Mahowald invariants as given by Behrens \cite{behrens2005rootinvariantsadamsspectral}. Let \( E \) be a ring spectrum for which the \( E \)-Adams spectral sequence converges, and let \( \bar{E} \) be the fiber of the unit map \( S \rightarrow E \). The \( E \)-Adams resolution of the sphere is given by:
\[
\begin{tikzcd}
S^0 & W_0 \arrow[d] \arrow[l, no head, Rightarrow] & W_1 \arrow[d] \arrow[l] & W_2 \arrow[d] \arrow[l] & W_3 \arrow[d] \arrow[l] & \cdots \arrow[l] \\
    & Y_0 & Y_1 & Y_2 & Y_3 & 
\end{tikzcd}
\]
where $W_k=\bar{E}^{(k)}$ and $Y_k=E\wedge\bar{E}^{(k)}$. Together with the skeletal filtration of $P^{\infty}_{-\infty}$, we may regard $P^{\infty}_{-\infty}$ as a bifiltered object with $(k,N)$-bifiltration given by \[
W_k(P^N)=(W_k\wedge P^N)_{-\infty}
\]

The spectra $W_k$ may be replaced by weakly equivalent approximations so that
for every $k$ the map $W_{k+1}\rightarrow W_k$ are inclusions of subcomplexes. Then we know that for $k_1\geq k_2$ and $N_1\leq N_2$ the bifiltration $W_{k_1}(P^{N_1})$ is a subcomplex of $W_{k_2}(P^{N_2})$.

Given sequences\[
I=\{k_1<k_2<\cdots<k_l\}
\]
\[
J=\{-N_1<-N_2<\cdots<-N_l\}
\]
with $k_i\geq0$, the filtered Tate spectrum is defined as the union
\[
W_I(P^J)=\mathop{\cup}\limits_{i}W_{k_i}(P^{N_i}),
\]
and for $1\leq i\leq l$, we have natural projection maps:\[
p_i:W_I(P^J)\rightarrow W_{k_i}(P^{N_i})
\]
by smashing with $E$. The filtered Mahowald invariants are defined as follows.

\begin{definition}\label{3.5}\cite{behrens2005rootinvariantsadamsspectral}
Let $\alpha$ be an element of $\pi_t(S)$, with image $l(\alpha)\in\pi_{t-1}(P^{\infty}_{-\infty})$. Choose a multi-index $(I,J)$ where $I=(k_1,k_2,\cdots)$ and $J=(N_1,N_2,\cdots)$ so that the filtered Tate spectrum $W_I(P^J)$ is initial amongst the Tate spectra $W_K(P^L)$ so that $l(\alpha)$ is in the image of the map
\[
\pi_{t-1}(W_K(P^L))\rightarrow \pi_{t-1}(P^{\infty}_{-\infty})
\]
Let $\widetilde{\alpha}$ be a lift of $l(\alpha)$ to $\pi_{t-1}(W_I(P^J))$. Then the $k_i$th $E$-filtered Mahowald invariant is given by\[
M_E^{[k_i]}(\alpha)=p_i(\widetilde{\alpha})\in\pi_{t-1}(Y_{k_i}\wedge S^{N_i}).
\]

To explain the property ``initial" precisely, we define $S(I,J):=\mathop{\cup}\limits_{i=1}^{l}\{(a,b):a\geq k_i,b\leq N_i\}$, and $(I',J')\leq(I,J)$ if and only if $S(I', J')\subseteq S(I,J)$. Given two pairs of sequences $(I',J')\leq(I,J)$, we define spectra\[
 W^{I'}_I(P^J_{J'})=cofiber(W_{I'+1}(P^{J'-1}\rightarrow W_I(P^J))
 \]
where $I'+1$(respectively $J'-1$) is the sequence obtained by increasing(decreasing) every element of the sequence by $1$.

We shall define a pair of sequence $(I,J)$ associated to $\alpha$ inductively. Let $k_1$ be maximal such that the composite\[
\begin{tikzcd}
S^{t-1} \arrow[r, "\alpha"] & \Sigma^{-1}X \arrow[r] & \Sigma^{-1}tX \arrow[r] & W^{k_1-1}_0(P\wedge X)_{-\infty}
\end{tikzcd}
\]is trivial. Here $tX$ is the Tate spectrum of $X$. Next, choose $N_1$ to be maximal such that the composite\[
\begin{tikzcd}
S^{t-1} \arrow[r, "\alpha"] & \Sigma^{-1}X \arrow[r] & \Sigma^{-1}tX \arrow[r] & {W^{(k_1-1,k)}_0(P_{(-N_1+1,\infty)}\wedge X)}
\end{tikzcd}
\]is trivial. Inductively, given $I'=(k_1,k_2,\cdots,k_i)$ and $J'=(-N_1,-N_2,\cdots,-N_i)$, let $k_{i+1}$ be maximal so that the composite\[
\begin{tikzcd}
S^{t-1} \arrow[r, "\alpha"] & \Sigma^{-1}X \arrow[r] & \Sigma^{-1}tX \arrow[r] & {W^{(I'-1,k_{i+1}-1)}_0(P_{(J'+1,\infty)}\wedge X)}
\end{tikzcd}
\]is trivial. If there is no such $k_{i+1}$, we declare that $k_{i+1}=\infty$ and finish the induction. Otherwise, choose $N_{i+1}$ to be maximal such that the composite\[
\begin{tikzcd}
S^{t-1} \arrow[r, "\alpha"] & \Sigma^{-1}X \arrow[r] & \Sigma^{-1}tX \arrow[r] & {W^{(I'-1,k_{i+1}-1,k_{i+1})}_0(P_{(J'+1,-N_{i+1}+1,\infty)}\wedge X)}
\end{tikzcd}
\]is trivial, and continue the inductive procedure.
\end{definition}

Similarly, there is an indeterminacy in the filtered root invariants based on the choice of $\widetilde{\alpha}$.

The relations among homotopy Mahowald invariants, filtered Mahowald invariants and algebraic Mahowald invariants are explained in the following theorems.

\begin{theorem}\label{3.6}\cite[Theorem 5.1]{behrens2005rootinvariantsadamsspectral}
Suppose that $R^{[k_i]}_E(\alpha)$ contains a permanent cycle $\beta$. Then there exists an element $\bar{\beta}\in\pi_*(X)$ detected by $\beta$ such that the following diagram commutes up to elements of $E$-Adams filtration greater than or equal to $k_{i+1}$:
\[
\begin{tikzcd}
S^t \arrow[r, "\bar{\beta}"'] \arrow[d, "\alpha"] & \Sigma^{-N_i+1}X \arrow[dd] \\
X \arrow[d]                                       &                             \\
tX \arrow[r]                                      & \Sigma P_{-N_i}\wedge X    
\end{tikzcd}
\]
\end{theorem}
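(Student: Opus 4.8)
The plan is to produce the homotopy witness $\bar{\beta}$ from the permanent cycle $\beta$ by convergence of the $E$-Adams spectral sequence, and then to verify commutativity of the square by a diagram chase in the bifiltered Tate spectrum, with the failure of exact commutativity controlled by the maximality that pins down $k_{i+1}$ in Definition \ref{3.5}.

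First I would unpack the data supplied by Definition \ref{3.5}. Associated to $\alpha$ we have a multi-index $(I,J)$ with $I=(k_1,\dots,k_l)$ and $J=(-N_1,\dots,-N_l)$, a lift $\widetilde{\alpha}\in\pi_{t-1}(W_I(P^J))$ of $l(\alpha)\in\pi_{t-1}(P^{\infty}_{-\infty})$, and the filtered invariant $\beta=R^{[k_i]}_E(\alpha)=p_i(\widetilde{\alpha})\in\pi_{t-1}(Y_{k_i}\wedge S^{N_i})$, where $p_i$ is the projection of the bifiltration quotient smashed with $E$. The group $\pi_{t-1}(Y_{k_i}\wedge S^{N_i})$ is the relevant $E_1$-term of the $E$-Adams spectral sequence for $\Sigma^{-N_i+1}X$ in filtration $k_i$. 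Since $\beta$ is a permanent cycle, it survives to $E_{\infty}$, and convergence of the $E$-Adams spectral sequence (assumed for $E$) yields a homotopy class $\bar{\beta}\in\pi_t(\Sigma^{-N_i+1}X)$ detected by $\beta$. This supplies the top arrow of the claimed square.

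Next I would assemble the square. The bottom-row map $tX\to\Sigma P_{-N_i}\wedge X$ and the bottom-cell inclusion $\Sigma^{-N_i+1}X\to\Sigma P_{-N_i}\wedge X$ both record, through the skeletal filtration of $P_{-N_i}$, how classes sit over the $(-N_i)$-coskeleton. I would factor both composites around the square through $W_{k_i}(P^{N_i})$ and compare their images in the associated graded. On one side, $\alpha$ followed by $X\to tX\to\Sigma P_{-N_i}\wedge X$ is represented by the skeletal projection of $\widetilde{\alpha}$; on the other, $\bar{\beta}$ followed by the bottom-cell inclusion is represented by a lift of $\beta=p_i(\widetilde{\alpha})$. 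By construction these two representatives have the same image in $Y_{k_i}\wedge S^{N_i}$, so the square commutes after passing to this associated graded piece.

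The main obstacle, and the step demanding genuine care, is to bound the difference of the two composites before passing to the associated graded. Because $\bar{\beta}$ detects $\beta$ only modulo higher $E$-Adams filtration, and because $l(\alpha)$ has been lifted to the whole bifiltered object $W_I(P^J)$ rather than to a single filtration stage, the two paths need not agree on the nose. Their difference is a class whose image in the bottom-cell subquotient vanishes; by the maximality in the inductive choice of $k_{i+1}$ in Definition \ref{3.5} — namely that the composite through $W^{(I'-1,k_{i+1}-1)}_0(P_{(J'+1,\infty)}\wedge X)$ is trivial — this difference must factor through the next filtration stage $W_{k_{i+1}}$ and therefore has $E$-Adams filtration at least $k_{i+1}$. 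The delicate part is purely organizational: two filtrations are active at once, so I must take associated graded pieces in a compatible order and track how $\widetilde{\alpha}$ and its projections move between the subquotients $W^{I'}_I(P^J_{J'})$. Getting this bookkeeping right, rather than invoking any new homotopy-theoretic input, is what carries the argument through.
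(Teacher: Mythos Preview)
The paper does not supply its own proof of this theorem: it is quoted verbatim as \cite[Theorem 5.1]{behrens2005rootinvariantsadamsspectral} and used as a black box in the subsequent computations. There is therefore no proof in the paper to compare your proposal against.

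That said, your outline is essentially the argument Behrens gives. Convergence of the $E$-Adams spectral sequence produces $\bar\beta$ from the permanent cycle $\beta$, and the comparison of the two composites in $\pi_{t-1}(\Sigma P_{-N_i}\wedge X)$ goes through the bifiltered lift $\widetilde\alpha\in\pi_{t-1}(W_I(P^J))$. The one point worth sharpening: the reason the difference has $E$-Adams filtration at least $k_{i+1}$ is not directly the maximality of $k_{i+1}$ in the inductive definition, but rather the structure of the filtered Tate spectrum $W_I(P^J)=\bigcup_j W_{k_j}(P^{N_j})$ itself. When you project $\widetilde\alpha$ to $P_{-N_i}\wedge X$ and subtract the contribution coming from the bottom cell via $\bar\beta$, what remains lifts to $\bigcup_{j>i}W_{k_j}(P^{N_j})$, and the smallest Adams filtration appearing there is $k_{i+1}$. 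The maximality clause in Definition~\ref{3.5} is what guarantees the multi-index $(I,J)$ is well-defined in the first place, but once you have the lift $\widetilde\alpha$ to $W_I(P^J)$, the filtration bound on the error is read off from the shape of $I$. Your final paragraph gestures at this but conflates the two roles slightly; otherwise the sketch is sound.
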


\begin{corollary}\label{3.7}\rm{(\cite{Behrens_2007}, Corollary 6.2)}
 Let $\beta$ be the element described in Theorem \ref{3.6}. Then in order for $\beta$ to detect the homotopy Mahowald invariant in the $E$-ASS, it is sufficient to check two things:
 \begin{enumerate}
     \item No element $\gamma\in\pi_{t-1}(P_{-N_i})$ of $E$-Adams filtration greater than $k_i$ can detect the Mahowald invariant of $\alpha$ in $P_{-N_i+1}$.
     \item The image of the element $\bar{\beta}$ under the inclusion of the bottom cell\[
     \pi_{t-1}(S^{-N_i})\rightarrow\pi_{t-1}(P_{-N_i})
     \]is nontrivial.
 \end{enumerate}
\end{corollary}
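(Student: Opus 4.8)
The plan is to deduce the corollary from Theorem~\ref{3.6}, which already supplies the candidate $\bar{\beta}\in\pi_*(X)$ detected by the permanent cycle $\beta$ together with the square relating $\alpha$, $\bar{\beta}$, the Tate map, and the bottom-cell inclusion $\Sigma^{-N_i+1}X\to\Sigma P_{-N_i}\wedge X$, commuting modulo $E$-Adams filtration $\ge k_{i+1}$. What remains is to verify that conditions (1) and (2) upgrade this filtered statement into genuine detection of the homotopy Mahowald invariant, so the argument is really about confirming that these two hypotheses are jointly sufficient.

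First I would recall that $M(\alpha)$ is by definition the lift to the bottom cell $S^{-N}$ of the composite $S^{t-1}\xrightarrow{\alpha}S^{-1}\simeq P^{\infty}_{-\infty}\to P^{\infty}_{-N}$, where $N$ is minimal with this composite nontrivial; the first task is therefore to pin this $N$ to the index $N_i$. By the inductive construction of $(I,J)$ in Definition~\ref{3.5}, the composite into the one-higher coskeleton $P_{-N_i+1}$ already vanishes at $E$-Adams filtration $\le k_i$, but it need not vanish outright — an essential class of strictly higher filtration could survive, which is exactly the pathology flagged around diagrams~\eqref{2} and~\eqref{1}. Condition (1) is precisely the statement that no such higher-filtration class detects the Mahowald invariant in $P_{-N_i+1}$; I would use it to conclude that $S^{t-1}\to P_{-N_i+1}$ is null, and hence, composing with the further collapses $P_{-N_i+1}\to P_{-N'}$ for $N'<N_i$, that the minimal nontrivial cell is at least $-N_i$.

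Next I would invoke condition (2) together with the commuting square. The image of $\bar{\beta}$ under the bottom-cell inclusion $\pi_{t-1}(S^{-N_i})\to\pi_{t-1}(P_{-N_i})$ agrees, modulo filtration $\ge k_{i+1}$, with the image of $\alpha$ under the Tate map followed by projection to the $-N_i$-coskeleton — that is, with the very class whose bottom-cell lift defines $M(\alpha)$. Condition (2) guarantees this bottom-cell image is nonzero; in particular $S^{t-1}\to P_{-N_i}$ is nontrivial, which combined with the previous paragraph forces $N=N_i$ to be the minimal cell. Since the agreement holds modulo filtration $\ge k_{i+1}$ and the surviving class is nonzero, the filtration error terms cannot cancel it, so $\bar{\beta}$ restricted to $S^{-N_i}$ lands in the nonzero coset representing $M(\alpha)$, i.e.\ $\beta$ detects $M(\alpha)$ in the $E$-ASS.

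The main obstacle I anticipate is the filtration bookkeeping: Theorem~\ref{3.6} only guarantees commutativity modulo filtration $\ge k_{i+1}$, so I must ensure throughout that this discrepancy neither secretly makes $S^{t-1}\to P_{-N_i+1}$ essential, which would raise the minimal cell, nor perturbs the bottom-cell coset. Conditions (1) and (2) are tailored to neutralize exactly these two failure modes, and the heart of the proof is confirming that, once the minimal cell is fixed at $-N_i$ and the bottom-cell image is shown nonzero, no residual higher-filtration ambiguity can obstruct $\beta$ from detecting $M(\alpha)$.
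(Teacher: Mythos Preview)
The paper does not give its own proof of this corollary: it is quoted as \cite{Behrens_2007}, Corollary~6.2, and used thereafter as a black box. So there is nothing in the present paper to compare your argument against.

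That said, your outline is the natural derivation from Theorem~\ref{3.6} and is essentially correct. The filtered-root-invariant construction in Definition~\ref{3.5} already forces the Tate image of $\alpha$ in $P_{-N_i+1}$ to have $E$-Adams filtration strictly greater than $k_i$; condition~(1) then excludes any such high-filtration survivor, so that composite is null and the minimal cell satisfies $N\ge N_i$. Condition~(2) together with the mod-$k_{i+1}$ commuting square of Theorem~\ref{3.6} gives $N=N_i$ with $\beta$ detecting. The one place your sketch could be tightened is the last step: condition~(2) only says the bottom-cell image of $\bar\beta$ is nonzero in $\pi_{t-1}(P_{-N_i})$, not that it sits in filtration exactly $k_i$ there, so to conclude that the filtration-$\ge k_{i+1}$ error term cannot cancel it you should invoke that $\beta$ is a nontrivial permanent cycle in filtration $k_i$ and that the bottom-cell inclusion respects the $E$-Adams filtration. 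With that remark made explicit, your argument goes through.
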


Before the next theorem, we recall the definition of $K$-Toda bracket by Behrens\cite{Behrens_2007}: Let $K$ be a finite CW complex with a single cell in top dimension $n$ and the bottom dimension $0$. There is an inclusion map $\iota:S^0\rightarrow K$ and the $n$-cell is attached to the $n-1$ skeleton $K^{n-1}$ by an attaching map $a:S^{n-1}\rightarrow K^{n-1}$. Let $\beta$ be an element in $\pi_t(S)$, then the $K$-Toda bracket is defined to a lift in the following diagram:\[
\begin{tikzcd}
S^{t+n-1} \arrow[r, "\beta"] \arrow[rrd, "\langle K\rangle(\beta)", dashed] & S^{n-1} \arrow[r, "a"] & K^{n-1}                      \\
                                                                            &                        & S^0 \arrow[u, "\iota", hook]
\end{tikzcd}.
\]
\begin{theorem}\label{3.8}\cite[Theorem 5.3]{behrens2005rootinvariantsadamsspectral}
Suppose that the $P^{-N_{i+1}}_{-N_i}$-Toda bracket has $E$-Adams degree $d$ and $d\leq k_{i+2}-k_{i+1}$.Then the following statements are true:
\begin{enumerate}
    \item $\langle P^{-N_{i+1}}_{-N_i}\rangle(R^{[k_{i+1}]}_E(\alpha))$ is defined and contains a permanent cycle.
    \item $R^{[k_i]}_E(\alpha)$ consists of elements which are $d_r$ cycles for $r<k_{i+1}-k_i+d$.
    \item There is a containment\[
    d_{r_i+d}R^{[k_i]}_E(\alpha)\subseteq\langle P^{-N_{i+1}}_{-N_i}\rangle(R^{[k_{i+1}]}_E(\alpha))
    \]where elements of both sides are thought of as elements of $E^{*,*}_{k_{i+1}-k_i+d}$.
\end{enumerate}
\end{theorem}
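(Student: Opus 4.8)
The plan is to realize the passage from $R^{[k_i]}_E(\alpha)$ to $R^{[k_{i+1}]}_E(\alpha)$ as a single differential in the spectral sequence associated to the bifiltered Tate spectrum $W_I(P^J)$, and then to identify that differential geometrically by a boundary-map computation. First I would isolate the subquotient of the bifiltration responsible for the transition from the $i$-th to the $(i+1)$-st stage. Concretely, the cofiber $W^{I'}_I(P^J_{J'})$ introduced in Definition \ref{3.5} packages both the $E$-Adams jump from $k_i$ to $k_{i+1}$ and the skeletal jump from $-N_i$ to $-N_{i+1}$; restricting the lift $\widetilde{\alpha}$ of $l(\alpha)$ to this subquotient is exactly what exhibits the attaching data of the stunted spectrum $P^{-N_{i+1}}_{-N_i}$. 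The key observation to establish is that this attaching data is precisely the $K$-Toda bracket construction recalled before the theorem, with $K=P^{-N_{i+1}}_{-N_i}$ and with input a permanent cycle $\beta$ detecting $R^{[k_{i+1}]}_E(\alpha)$.

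For part (1), I would first verify that the Toda bracket is defined. The maximality conditions built into the inductive choice of $(k_{i+1},N_{i+1})$ force the composite of $\beta$ with the attaching map $a$ of the top cell of $P^{-N_{i+1}}_{-N_i}$ to vanish after the relevant projection, so $\langle P^{-N_{i+1}}_{-N_i}\rangle\bigl(R^{[k_{i+1}]}_E(\alpha)\bigr)$ is nonempty. That it contains a permanent cycle I would deduce from Theorem \ref{3.6} applied at stage $i+1$: since $\beta$ is a permanent cycle, there is a genuine homotopy class $\bar\beta$ detected by $\beta$ making the defining square commute up to $E$-Adams filtration $\geq k_{i+2}$, and the hypothesis $d\le k_{i+2}-k_{i+1}$ ensures the filtration slack is large enough that the chosen lift defining the bracket is realized by a genuine homotopy class, hence is a permanent cycle.

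For parts (2) and (3), I would run a geometric boundary argument in the bifiltered setting. By the maximality of $N_i$ at stage $i$, the filtered invariant $R^{[k_i]}_E(\alpha)=p_i(\widetilde{\alpha})$ is a $d_r$-cycle for every $r$ below the length at which the first nontrivial attaching map into the $-N_{i+1}$ cell is detected; identifying this length as $r_i+d$, with base length $r_i=k_{i+1}-k_i$, gives (2). For the containment (3), the connecting homomorphism of the cofiber sequence carries $\widetilde{\alpha}$ into the subquotient precisely by composition with the attaching map of $P^{-N_{i+1}}_{-N_i}$, and this composite is by construction the Toda bracket of $\beta$. Tracking the two filtration degrees simultaneously then shows that both $d_{r_i+d}R^{[k_i]}_E(\alpha)$ and $\langle P^{-N_{i+1}}_{-N_i}\rangle\bigl(R^{[k_{i+1}]}_E(\alpha)\bigr)$ are computed on the common page $E^{*,*}_{k_{i+1}-k_i+d}$, yielding the asserted containment, up to the indeterminacies of the bracket and of the lift $\widetilde{\alpha}$.

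The main obstacle I anticipate is the simultaneous bookkeeping of the two filtrations in the geometric boundary step: one must check that the $E$-Adams degree $d$ of the Toda bracket adds to the base differential length $r_i=k_{i+1}-k_i$ in exactly the claimed way, and that the hypothesis $d\le k_{i+2}-k_{i+1}$ is the precise condition preventing the target of the differential from being disturbed by the next filtration jump at $k_{i+2}$ (which would otherwise allow $R^{[k_{i+1}]}_E(\alpha)$ to support its own differential before the bracket is realized). Making the identification of the connecting map with the attaching map rigorous, while correctly carrying the indeterminacies on both sides so that a containment rather than an equality is what survives, is where the care will be concentrated.
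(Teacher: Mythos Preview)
The paper does not prove this theorem: it is quoted verbatim as \cite[Theorem 5.3]{behrens2005rootinvariantsadamsspectral} and used as a black box in the later computations (Propositions \ref{5.13}, \ref{5.14}, \ref{5.17}). There is therefore no ``paper's own proof'' to compare your proposal against.

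For what it is worth, your outline is broadly in the spirit of Behrens' original argument, which does proceed by a geometric-boundary analysis of the bifiltered Tate object. If you actually want to write the proof out, the place where your sketch is vaguest is also the place where the real work lies: you assert that the connecting map in the relevant cofiber sequence ``is by construction the Toda bracket of $\beta$,'' but making that identification precise requires producing compatible lifts in both the $E$-Adams and skeletal directions simultaneously and checking that the degree-$d$ hypothesis is exactly what lets you do this without overshooting into filtration $k_{i+2}$. Your description of parts (2) and (3) conflates the source of the $d_r$-cycle property (it comes from the maximality of $k_{i+1}$, not of $N_i$) and does not explain why the indeterminacy of the bracket absorbs the indeterminacy of $\widetilde{\alpha}$ rather than the other way around, which is what forces a containment instead of an equality. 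None of this is fatal to the strategy, but none of it is handled in the present paper either---if you need the details, go to Behrens' Section 5.
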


\begin{theorem}\label{3.9}\cite[Theorem 5.10]{behrens2005rootinvariantsadamsspectral}

If $E$ is the Eilenberg-MacLane spectrum $HF_p$ and $\alpha$ has Adams filtration $k$, then $k_1=k$, Furthermore, the filtered Mahowald invariant $M_{HF_p}^{[k]}$ consists of $d_1$ cycles which detect a coset of non-trivial elements $\bar{R}^{[k]}_{HF_p}(\alpha)\subseteq E_2^{k,t+k+N_1-1}(X)$ and there exists a choice of $\widetilde{\alpha}\in E_2^{t,t+k}(X)$ which detects $\alpha$ in the ASS such that\[
\bar{R}^{[k]}_{HF_p}(\alpha)\subseteq M_{alg}(\widetilde{\alpha})
\].
    
\end{theorem}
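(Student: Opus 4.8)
The plan is to exploit the fact that when $E = HF_p$ the $E$-Adams resolution is the classical Adams resolution, so that $W_k = \bar{E}^{(k)}$ records Adams filtration, each $Y_k = HF_p \wedge \bar{E}^{(k)}$ is a wedge of suspensions of $HF_p$, and the spectral sequence of the tower of the $Y_k$ is the mod $p$ Adams spectral sequence with $E_2 = \Ext_{\mathcal{A}_*}$. First I would establish $k_1 = k$ directly from the inductive recipe in Definition \ref{3.5}. Since $\alpha$ has Adams filtration exactly $k$, the composite $S^{t-1} \xrightarrow{\alpha} \Sigma^{-1}X \to \Sigma^{-1}tX \to W^{k_1-1}_0(P \wedge X)_{-\infty}$ passes through the cofiber of $W_{k_1} \to W_0$, which records only filtrations $0,\dots,k_1-1$; a class of filtration $k$ lifts to $W_{k_1}$ and hence dies in this cofiber precisely when $k_1 \leq k$, so the maximal $k_1$ making the composite trivial is $k_1 = k$.

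For the second assertion I would identify the associated graded of the bifiltration $W_\bullet(P^\bullet)$ with $\Ext$ of the relevant skeleta of $P^\infty_{-\infty} \wedge X$. By the very construction of $(I,J)$, the class $l(\alpha) \in \pi_{t-1}(P^\infty_{-\infty})$ lifts to $\pi_{t-1}(W_I(P^J))$, so its projection $M_{HF_p}^{[k]}(\alpha) = p_1(\widetilde{\alpha}) \in \pi_{t-1}(Y_k \wedge S^{N_1})$ represents a class in the Adams $E_1$-term; the existence of the lift forces it to be a $d_1$-cycle and hence to determine a coset $\bar{R}^{[k]}_{HF_p}(\alpha)$ in the $E_2$-page. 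A bookkeeping of degrees, with $\alpha$ detected by a class in $E_2^{k,t+k}(X)$ and the bottom $-N_1$-cell contributing the internal shift $N_1 - 1$, places this coset in $E_2^{k,\,t+k+N_1-1}(X)$, as claimed.

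The heart of the proof is the containment $\bar{R}^{[k]}_{HF_p}(\alpha) \subseteq M_{alg}(\widetilde{\alpha})$, and here I would compare the topological filtered construction with the algebraic $\Ext$ diagram defining $M_{alg}$. Fix a detecting class $\widetilde{\alpha} \in E_2^{k,t+k}(X)$; under $i_*$ it maps into $\Ext(H_*P^\infty_{-\infty} \wedge X)$, and the minimal coskeletal level at which this image is carried by the bottom $-N$-cell — the level $N$ appearing in the definition of $M_{alg}$ — coincides with the $N_1$ selected by the bifiltration, because the $HF_p$-Adams $d_1$-differentials of the tower agree with the connecting maps $\nu_N$ and $\iota_N$ of the cellular filtration of $P^\infty_{-\infty} \wedge X$. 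Tracing the $d_1$-cycle representing $M_{HF_p}^{[k]}(\alpha)$ back through these identifications exhibits it as a lift $\gamma$ with $\iota_{N_1}(\gamma) = \nu_{N_1} i_*(\widetilde{\alpha})$, that is, as a representative of the algebraic Mahowald coset; matching the indeterminacy in the choice of $\widetilde{\alpha}$ with the freedom in choosing $\gamma$ then yields the stated containment.

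The step I expect to be the main obstacle is precisely this last comparison: one must check carefully that, on passing to associated graded, the $d_1$-differentials of the $HF_p$-Adams resolution of the bifiltered Tate spectrum coincide with the maps $\nu_N$ and $\iota_N$ of the algebraic diagram, and that the two coset indeterminacies correspond. This amounts to a naturality statement for the Adams spectral sequence applied simultaneously to the skeletal (cellular) filtration of $P^\infty_{-\infty} \wedge X$ and to the homological Adams filtration; the delicate point is to show that ``surviving to the $-N_1$-coskeleton at Adams filtration $k$'' on the homotopy side translates exactly into the $\Ext$-level vanishing condition that defines $N_1$ in $M_{alg}$.
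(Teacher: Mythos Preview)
The paper does not give its own proof of this statement: Theorem~\ref{3.9} is simply quoted from \cite[Theorem 5.10]{behrens2005rootinvariantsadamsspectral} and used as a black box in the subsequent computations, so there is nothing in the present paper to compare your proposal against. Your outline is, in fact, a reasonable sketch of the argument Behrens gives in the cited reference: one first shows $k_1=k$ from the definition of Adams filtration, then identifies the associated graded of the $HF_p$-bifiltration with the cellular/Adams $E_1$-page so that the first nontrivial filtered invariant is a $d_1$-cycle, and finally compares the coskeletal truncation defining $M_{alg}$ with the choice of $N_1$ in the bifiltration. The point you flag as the main obstacle---matching the ``minimal $N$'' in the $\Ext$ diagram with the $N_1$ selected in Definition~\ref{3.5}, together with the two indeterminacies---is indeed where the work lies, and Behrens handles it by passing through an algebraic Atiyah--Hirzebruch spectral sequence for $\Ext(H_*P^\infty_{-\infty}\wedge X)$; if you want to complete the argument you should consult \cite{behrens2005rootinvariantsadamsspectral} directly rather than this paper.
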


To compute algebraic Mahowald invariants, we need the assistance of squaring operations in $\Ext(\F_2,\F_2)$ constructed by Milgram:
\begin{proposition}\label{3.10}\cite[Theorem 3.1.3 and Theorem 4.1.1]{MR304463}
There are operations $Sq^i$ in $\Ext_{\mathcal{A}_*}(\F_2,\F_2)$ so that
$$
d_2(Sq^i(a))=\left\{
\begin{array}{l}
  h_0Sq^{i+1}(a)\quad i\equiv t\enspace(\mathrm{mod}\thinspace2)
  \\0\quad otherwise
\end{array}\right.
$$

for $a\in\Ext_{\mathcal{A}_*}^{s,t}(\F_2,\F_2)$
\end{proposition}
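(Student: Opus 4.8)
The plan is to construct the operations from a $\Sigma_2$-equivariant enrichment of the resolution computing $\Ext_{\mathcal{A}_*}(\F_2,\F_2)$ and then to read off the $d_2$-formula from the attaching-map data of $B\Sigma_2$. First I would use that the Steenrod algebra $\mathcal{A}$ is cocommutative: the normalized cobar complex $C^{\bullet}$ computing $\Ext_{\mathcal{A}_*}(\F_2,\F_2)$ then carries a $\Sigma_2$-equivariant diagonal, and together with the standard free $\F_2[\Sigma_2]$-resolution $W_{\bullet}$ of $\F_2$ (the cellular chains of $B\Sigma_2=\R P^{\infty}$, with boundary $\partial e_n=(1+(-1)^n)e_{n-1}$) this produces a structure map $\theta\colon W_{\bullet}\otimes_{\Sigma_2}(C\otimes C)\to C$. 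This is exactly the input of May's general algebraic theory of Steenrod operations, which I would invoke to define, for a cocycle $a$ of bidegree $(s,t)$, the classes $Sq^{i}(a)=[a\cup_{s-i}a]=[\theta(e_{s-i}\otimes a\otimes a)]$. A standard verification gives $Sq^{i}\colon\Ext^{s,t}\to\Ext^{s+i,2t}$ with $Sq^{s}(a)=a^{2}$ the Yoneda square and $Sq^{0}$ the lowest operation; the cup-$i$ coboundary formula shows that in characteristic $2$ each $a\cup_{s-i}a$ is a $d_1$-cocycle when $a$ is, so the $Sq^i$ are well defined on the $E_2$-page.

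The crux is the second step, computing the Adams differential $d_2$, which is a \emph{secondary} operation rather than the internal differential of $C^{\bullet}$. I would compute it by a diagram chase in the $\Sigma_2$-equivariant Adams resolution: the class $Sq^{i}(a)$ is represented using the cell $e_{s-i}$ of $B\Sigma_2$, and its $d_2$ is governed by the attaching map of that cell onto $e_{s-i-1}$. The point is that this attaching map is multiplication by $2$ in the appropriate parity and null otherwise, and on the $E_2$-page multiplication by $2$ is detected by $h_0\in\Ext^{1,1}$; the cell shift $e_{s-i}\mapsto e_{s-i-1}$ simultaneously advances the operation index from $i$ to $i+1$. This yields $d_2(Sq^{i}a)=h_0\,Sq^{i+1}(a)$ precisely when the relevant attaching map is essential, and $0$ otherwise. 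The parity selecting the essential case is forced by the Koszul twist of the $\Sigma_2$-action on $a\otimes a$, which depends on the internal degree $t$, and a careful bookkeeping shows the essential case is exactly $i\equiv t\pmod 2$. As a consistency check, taking $a=h_j$ (so $s=1$, $t=2^j$, $i=0$) reproduces the Hopf-invariant-one differential $d_2(h_{j+1})=d_2(Sq^{0}h_j)=h_0\,Sq^{1}(h_j)=h_0h_j^{2}$ for $j\ge 1$, while the odd case $t=1$ correctly gives $d_2(h_1)=0$.

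The main obstacle is this last identification: one must pin down on which page of the spectral sequence the correction term lives and, above all, get the parity condition exactly $i\equiv t\pmod 2$ rather than an off-by-one variant in $s$ or $t-s$, since the internal-degree twist interacts subtly with the cell grading of $W_{\bullet}$. A cleaner, if heavier, alternative would bypass the purely algebraic boundary computation by using the $E_\infty$-ring structure on the sphere spectrum: its mod $2$ Adams spectral sequence then carries Steenrod operations in the sense of Bruner's theory of operations in spectral sequences, the operations on $E_2$ agree with the algebraic $Sq^i$ by a comparison argument, and the $d_2$-formula, together with the factor $h_0$ and the parity, falls out of the general Kudo-type transgression formula applied to the extended power $D_2 S=(S\wedge S)_{h\Sigma_2}$ and the attaching structure of $\R P^{\infty}$.
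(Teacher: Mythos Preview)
The paper does not supply its own proof of this proposition: it is stated with a bare citation to Milgram's Theorems~3.1.3 and~4.1.1 and used as black-box input for the later computations. There is therefore nothing in the paper to compare your proposal against.

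That said, your sketch is essentially the line of Milgram's original argument (later refined and systematized by Bruner in the $H_\infty$ ring spectra framework you mention as the alternative). The construction of the $Sq^i$ via May's general machinery for cocommutative Hopf algebras is standard and correct, and the idea that the Adams $d_2$ is governed by the degree-$2$ attaching maps in $B\Sigma_2=\R P^\infty$, detected by $h_0$ on the $E_2$-page, is exactly the mechanism. Your flagged obstacle---pinning down the parity as $i\equiv t\pmod 2$ rather than some shift involving $s$---is precisely where the real work lies in Milgram's paper, and your Hopf-invariant-one consistency check is the right sanity test. The one place your outline is thin is the passage from ``$\Sigma_2$-equivariant Adams resolution'' to an actual computation of the Adams $d_2$ (as opposed to the internal cobar $d_1$); making this precise requires the quadratic-construction/extended-power formalism you allude to in your alternative, and in practice one either follows Milgram's explicit chain-level argument or invokes Bruner's general Kudo-type theorem for $H_\infty$ ring spectra.
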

    
 In this setting, $Sq^0$ is not the identity but in general is a non-zero class in twice the $t$-filtration but in the same $s$-filtration, so we deduce that\[
 Sq^0(h_i)=h_{i+1}, \forall i\in\N
 \]
More generally, we know $Sq^0(x)\in M_{alg}(x)$ if $Sq^0(x)\neq0$ by \cite[Proposition 2.5]{MR1241877}, by which Bruner gives the results of algebraic Mahowald invariants in $\Ext$ over Steenrod algebra through the 25-stem in Bruner\cite{MR1642887}. Since there is a Cartan formula on square operations, we obtain the following corollary:
\begin{corollary}\label{3.11}
  If $a$ and $b$ are two elements in $\Ext(S^0)$ with $Sq^0(a)Sq^0(b)\neq0$, then $Sq^0(ab)=Sq^0(a)Sq^0(b)\in M_{alg}(ab)$.   
\end{corollary}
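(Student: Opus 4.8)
The plan is to deduce the corollary directly from the two ingredients recalled just above the statement: the Cartan formula for the Milgram squaring operations of Proposition \ref{3.10}, and the fact (from \cite[Proposition 2.5]{MR1241877}) that $Sq^0(x)$ represents $M_{alg}(x)$ whenever $Sq^0(x)\neq 0$. The entire content is the observation that, in internal degree zero, the Cartan formula collapses to a single term, so that $Sq^0$ is multiplicative; one then feeds the nonvanishing hypothesis into the membership criterion.

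First I would write down the Cartan formula for these operations. For classes $a,b\in\Ext_{\mathcal{A}_*}(\F_2,\F_2)$ it reads
\[
Sq^k(ab)=\sum_{i+j=k}Sq^i(a)\,Sq^j(b),
\]
where the operations $Sq^i$ are defined (and can be nonzero) only for $i\geq 0$. Specializing to $k=0$, the sole surviving summand is the one with $i=j=0$, since any term with a negative index vanishes identically. Hence
\[
Sq^0(ab)=Sq^0(a)\,Sq^0(b).
\]
This already yields the asserted equality and exhibits $Sq^0$ as a ring homomorphism on $\Ext$; the only care needed here is to confirm that no lower operations contribute, which is automatic from the non-negativity of the index. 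One checks in passing that the bidegrees match: since $Sq^0$ doubles the $t$-grading while preserving $s$, both sides lie in the same group, consistent with the target of $M_{alg}(ab)$.

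Next I would invoke the hypothesis $Sq^0(a)Sq^0(b)\neq 0$. By the displayed identity this says precisely that $Sq^0(ab)\neq 0$. Applying the criterion $Sq^0(x)\in M_{alg}(x)$ (valid whenever $Sq^0(x)\neq 0$) to the class $x=ab$ then gives $Sq^0(ab)\in M_{alg}(ab)$. Combining this membership with the equality of the previous step produces
\[
Sq^0(ab)=Sq^0(a)\,Sq^0(b)\in M_{alg}(ab),
\]
which is the claim. Note there is no difficulty with the coset indeterminacy of $M_{alg}$, since we only assert membership, not an equality of cosets.

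The nearest thing to an obstacle is purely expository: one must be sure that the Cartan formula holds for these specific squaring operations and that $Sq^0$ preserves the internal grading in the way needed for the $k=0$ term to land in the correct bidegree. Both facts are part of Milgram's construction underlying Proposition \ref{3.10}, so once they are cited the argument is immediate, and the computation itself is a one-line application of the degenerate Cartan formula.
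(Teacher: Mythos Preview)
Your argument is correct and is exactly the approach the paper intends: the corollary is stated immediately after noting that there is a Cartan formula for the Milgram squaring operations and that $Sq^0(x)\in M_{alg}(x)$ whenever $Sq^0(x)\neq 0$, so the paper's implicit proof is precisely the specialization of the Cartan formula to $k=0$ followed by an appeal to that membership criterion. You have simply written out the details the paper leaves to the reader.
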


The equivariant definition of Mahowald invariants provides an elementary proof of the Cartan formula in the homotopy Mahowald invariant, which will be used in the computations of homotopy Mahowald invariants later:
\begin{theorem}\label{3.12}\cite[Theorem 1]{MR1622335}
 Let $\alpha_i\in\pi_{n_i}(S^0)$ and $M(\alpha_i)\in\pi_{n_i+k_i}(S^0)$ for $i=1,2$. Let $k=k_1+k_2$ and let $i:S^{-k-1}\rightarrow P^{\infty}_{-k-1}$ be the inclusion of the bottom cell of the stunted projective space $P^{\infty}_{-k-1}$. Then we have:
 \begin{enumerate}
     \item If $i_*(M(\alpha_1)M(\alpha_2))\neq0$, then $M(\alpha_1)M(\alpha_2)\subset M(\alpha_1\alpha_2)$
     \item If $i_*(M(\alpha_1)M(\alpha_2))=0$, then $M(\alpha_1\alpha_2)$ lies in a higher stem than does $M(\alpha_1)M(\alpha_2)$.
 \end{enumerate}
\end{theorem}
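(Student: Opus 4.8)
The plan is to deduce the Cartan formula from the multiplicativity of the Tate filtration, using the $C_2$-equivariant model of the Mahowald invariant already alluded to in the introduction (Jones). Let $C_2$ act trivially on $S^0$ and let $tS$ denote the $C_2$-Tate construction of the sphere. By the Segal conjecture (equivalently Lin's theorem) one has $tS\simeq S^0_2$ and $\Sigma^{-1}tS\simeq P^{\infty}_{-\infty}$, and the skeletal filtration of $P^{\infty}_{-\infty}$ is a shift of the Tate filtration, whose bottom cells are the spheres $S^{-N}$ appearing in the tower $\{P^{\infty}_{-N}\}$. Under the $2$-completion equivalence $S^0\to tS$ an element $\alpha\in\pi_n(S^0)$ acquires a filtration, and its Mahowald invariant $M(\alpha)$ is precisely the leading term: if the deepest cell carrying the image $l(\alpha)\in\pi_{n-1}(P^{\infty}_{-\infty})$ is $S^{-N}$, then $N$ is minimal with nonzero image in $P^{\infty}_{-N}$ and $M(\alpha)$ is the resulting coset $S^{n-1}\to S^{-N}$. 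This reformulation is what I would take as the starting point.

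The decisive structural input is that the Tate construction $t(-)$ is lax symmetric monoidal, so that $tS$ is a filtered ring spectrum whose filtration is multiplicative. Concretely, writing $L$ for the real sign representation of $C_2$, the identifications $S^{-aL}\wedge S^{-bL}\cong S^{-(a+b)L}$ together with the idempotences $\widetilde{EC_2}\wedge\widetilde{EC_2}\simeq\widetilde{EC_2}$ and $(EC_2)_+\wedge (EC_2)_+\simeq (EC_2)_+$ assemble into a pairing of the tower $\{P^{\infty}_{-N}\}$ with itself refining the ring multiplication on $tS\simeq S^0$. The content I need is the standard consequence for any multiplicative filtration: if $\alpha_i$ has leading term at filtration $N_i$, then $\alpha_1\alpha_2$ has filtration at least that of the product of the leading terms, and the associated-graded class of $\alpha_1\alpha_2$ is the product of those of $\alpha_1$ and $\alpha_2$ whenever the latter is nonzero. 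After normalising the filtration by the unit $1\in\pi_0(tS)$, this product of leading terms lands in the associated graded at the bottom cell $S^{-k-1}$, which is exactly the cell named by the inclusion $i\colon S^{-k-1}\to P^{\infty}_{-k-1}$ in the statement.

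With this in hand the two cases follow by inspecting the bottom-cell contribution. Choosing representatives $\widetilde{M}(\alpha_i)\colon S^{n_i-1}\to S^{-N_i}$, their smash composed with the tower pairing represents $M(\alpha_1)M(\alpha_2)$ in the associated graded at $S^{-k-1}$, and its image in $P^{\infty}_{-k-1}$ is $i_*(M(\alpha_1)M(\alpha_2))$. If $i_*(M(\alpha_1)M(\alpha_2))\neq0$, this class survives as the genuine leading term of $l(\alpha_1\alpha_2)$; combined with the sub-multiplicativity of the filtration (which forces the filtration to be at least $k+1$) this makes $N=k+1$ minimal, so $M(\alpha_1)M(\alpha_2)\subset M(\alpha_1\alpha_2)$, giving (1). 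If $i_*(M(\alpha_1)M(\alpha_2))=0$, the candidate leading term dies, so the first nonzero image of $l(\alpha_1\alpha_2)$ occurs in some $P^{\infty}_{-N}$ with $N>k+1$, whence $M(\alpha_1\alpha_2)$ lies in a strictly higher stem than $M(\alpha_1)M(\alpha_2)$, giving (2).

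The main obstacle is the second paragraph: one must make precise that the ring multiplication on $tS$ refines to a pairing of the skeletal towers compatible with the coskeletal projections and carrying the leading terms $M(\alpha_1),M(\alpha_2)$ to $M(\alpha_1)M(\alpha_2)$ in the associated graded at the correct cell. The delicate point is the unit normalisation producing the bottom cell $S^{-k-1}$ rather than $S^{-N_1-N_2}$ (the discrepancy being exactly the $\Sigma^{-1}$ relating $tS$ and $P^{\infty}_{-\infty}$), and it is here that the equivariant model is indispensable: the identities $S^{-aL}\wedge S^{-bL}\cong S^{-(a+b)L}$ and the idempotence of $\widetilde{EC_2}$ make the pairing transparent, whereas the corresponding Thom-spectrum statement over $\mathbb{R}P^{\infty}\times\mathbb{R}P^{\infty}$ is cumbersome. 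A final routine point is to track the indeterminacy from the choice of $\widetilde{M}(\alpha_i)$, which only enlarges the answer by the cosets already present in $M(\alpha_1)$ and $M(\alpha_2)$.
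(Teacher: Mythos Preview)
The paper does not supply its own proof of this theorem: it is quoted verbatim from \cite{MR1622335} and preceded only by the remark that ``the equivariant definition of Mahowald invariants provides an elementary proof of the Cartan formula in the homotopy Mahowald invariant.'' So there is nothing to compare against beyond that one-line hint, and your proposal is exactly the argument that hint is pointing to: interpret $M(-)$ as the leading term of the multiplicative Tate filtration on $tS\simeq S^0_2$ and read off the Cartan formula as the leading-term statement for a filtered ring.

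Your outline is correct and is the standard proof. Two small comments on the places you flag as delicate. First, the index bookkeeping: if $M(\alpha_i)$ raises the stem by $k_i$ then (in the paper's normalisation) $N_i=k_i+1$, and the pairing on the tower sends filtrations $N_1,N_2$ to $N_1+N_2-1=k+1$, which is precisely why the relevant bottom cell is $S^{-k-1}$; your ``unit normalisation'' remark is the right diagnosis, and in the equivariant picture it is simply that the unit $1\in\pi_0(tS)$ already sits in filtration $1$ (the $\Sigma^{-1}$ between $tS$ and $P^\infty_{-\infty}$). Second, the existence of the compatible pairing $P^\infty_{-m}\wedge P^\infty_{-n}\to P^\infty_{-(m+n-1)}$ really does come most cleanly from the equivariant model via $S^{-aL}\wedge S^{-bL}\cong S^{-(a+b)L}$ and the idempotence of $\widetilde{EC_2}$, as you say; attempting to build it directly from Thom spectra over $\mathbb{R}P^\infty$ is possible but unpleasant. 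With those two points in hand the rest is the routine filtered-ring argument you describe, and the indeterminacy discussion at the end is adequate.
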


\section{Computations of homotopy Mahowald invariants}
After the preparations above, we start our computations of the homotopy Mahowald invariants of elements up to 26-stem from the elements whose algebraic Mahowald invariants are nontrivial in the $E_\infty$-page. 
All the information about algebraic Mahowald invariants is from Bruner\cite{MR1642887} and Corollary \ref{3.11}. The notations and the data about Adams differentials and hidden extensions are taken from \cite{MR4588596}.

\begin{proposition}\label{easyones}
    For those elements in Table \ref{table} whose algebraic Mahowald invariants are nontrivial in the $E_\infty$-page, their homotopy Mahowald invariants are precisely the corresponding elements of the algebraic Mahowald invariants in the $E_\infty$-page.
\end{proposition}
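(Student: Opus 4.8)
The plan is to realize each such element as an instance of case (3) of Theorem \ref{easy}, with the homotopy-level bookkeeping supplied by the Behrens detection criterion. Fix an element $\alpha$ in the part of Table \ref{table} labelled by this proposition, write $u = M_{alg}(\alpha)$, and assume that $u$ is a nonzero class in the $E_\infty$-page of the ASS for the sphere. Let $k$ be the Adams filtration of $\alpha$. Applying Theorem \ref{3.9} with $E = HF_2$ gives $k_1 = k$ together with a representative $\widetilde\alpha$ of $\alpha$ for which the $HF_2$-filtered Mahowald invariant satisfies $\bar{R}^{[k]}_{HF_2}(\alpha) \subseteq M_{alg}(\widetilde\alpha)$. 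By hypothesis this coset contains the permanent cycle $u$, so the filtered invariant contains a permanent cycle, and Theorem \ref{3.6} then produces a homotopy element $\bar\beta$, detected by $u$, fitting into the commuting square of that theorem up to $HF_2$-Adams filtration at least $k_2$.

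It remains to show that $\bar\beta$ actually detects the homotopy Mahowald invariant, and for this I would check the two conditions of Corollary \ref{3.7}: first, that no class of Adams filtration strictly greater than $k$ detects the Mahowald invariant of $\alpha$ in $P^{\infty}_{-N+1}$; and second, that the image of $\bar\beta$ under the bottom-cell inclusion $S^{-N} \hookrightarrow P^{\infty}_{-N}$ is nonzero. Conceptually, these two checks are precisely what separates case (3) of Theorem \ref{easy} from the degenerate cases: since $u$ is a nonzero permanent cycle, case (1) of that theorem is excluded automatically; condition (1) of Corollary \ref{3.7} rules out case (2), in which $h_\#(u)$ would be killed by a differential; and condition (2) rules out case (4), in which $h\circ f$ is null and the invariant jumps to a higher stem. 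When both conditions hold we are in case (3), and $M(\alpha)$ is the homotopy class detected by $u$ in the $E_\infty$-page, which is exactly the entry recorded in the $M(\alpha)$ column of Table \ref{table}.

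The main obstacle is the verification of these two conditions for each entry, and this is where the cell structure of $P^{\infty}_{-N}$ enters. For the first condition I would read off, from Xu's Adams charts \cite{MR4588596} transported to $P^{\infty}_{-N}$ through its AHSS, that the relevant trigrading carries no survivor of Adams filtration above $k$; for the easy elements no such survivor exists in the dangerous range, so no unexpected higher-filtration detector can arise. For the second condition I would use the attaching maps of $P^{\infty}_{-N}$ — described by the Steenrod-square computation and organized by James periodicity — to see that the cells neighbouring the bottom cell do not kill the bottom-cell image of $\bar\beta$, so that this image persists. Each verification is a bounded computation in a fixed range and the table is finite, so carrying them out for every entry labelled by this proposition completes the argument. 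The entries for which one of the two conditions fails are exactly the elements treated separately in the later propositions, where a stem jump or a higher-filtration correction occurs, together with the five elements left undetermined in the main theorem.
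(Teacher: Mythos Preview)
Your argument lands in the right place—case (3) of Theorem~\ref{easy}—but the route you take is considerably more elaborate than the paper's. The paper's entire proof is the single sentence ``This can be obtained directly by Theorem~\ref{easy}.'' In other words, once the algebraic Mahowald invariant $M_{alg}(u)$ is a nonzero permanent cycle in the $E_\infty$-page, the paper simply reads off case~(3) of Theorem~\ref{3.4} and stops. Your detour through Theorem~\ref{3.9}, Theorem~\ref{3.6}, and the two-condition check of Corollary~\ref{3.7} is a valid reconstruction of why case~(3) applies, but the paper treats all of that as already packaged inside Theorem~\ref{easy} and does not unpack it.

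One genuine wrinkle in your write-up: you infer from $\bar R^{[k]}_{HF_2}(\alpha)\subseteq M_{alg}(\widetilde\alpha)$ that the filtered invariant contains a permanent cycle because $M_{alg}$ does. The containment runs the wrong way for that inference in general—knowing the larger coset contains a permanent cycle does not force the subcoset to. In practice this is harmless here, since for the entries in question $M_{alg}$ is a singleton (or every element of the indeterminacy is a permanent cycle), but you should say so rather than argue as if the containment went the other direction. Apart from that, your approach is correct; it simply makes explicit the bookkeeping the paper suppresses, and your closing remark that the entries where the two checks fail are exactly those handled in the later propositions is a nice way to frame the division of labor.
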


\begin{proof}
    This can be obtained directly by Theorem \ref{easy}.
\end{proof}

\hspace*{\fill}

For some other elements in Table \ref{table}, we follow Procedure 9.1 of Behrens\cite{behrens2005rootinvariantsadamsspectral} to check every candidate through information about the stem and filtration. These homotopy Mahowald invariants may have indeterminacy.

\begin{proposition}\label{5.14}
$\{\triangle h_1d_0^2\}\in M(\eta\mu_{17})$ 
\end{proposition}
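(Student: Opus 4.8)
The plan is to follow the template of Proposition~\ref{5.13}, in which $\eta\mu_{17}$ plays exactly the role that $\eta_4$ did there. First I would record that $\mu_{17}$ is detected by $P^2h_1$ in the Adams spectral sequence, so that $\eta\mu_{17}$ is detected by $P^2h_1^2$ in Adams filtration $10$. To compute its algebraic Mahowald invariant I would apply the squaring operations of Proposition~\ref{3.10} and read $Sq^0(P^2h_1^2)$ off Bruner's tables. Here the Cartan shortcut of Corollary~\ref{3.11} is not available: the relevant product of $Sq^0$-values, $Sq^0(h_1)\cdot Sq^0(P^2h_1)=h_2\cdot(h_1Ph_5c_0)$, vanishes because $h_1h_2=0$. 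By Theorem~\ref{3.9}, the resulting class (which sits in the $46$-stem) is a nontrivial element of $R^{[10]}_{HF_2}(\eta\mu_{17})$ lying in $M_{alg}(\widetilde{\eta\mu_{17}})$ for a suitable lift, and this is the initial term of the filtered Mahowald invariant.

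As in Proposition~\ref{5.13}, I expect this $46$-stem class not to be a permanent cycle. I would then locate the Adams differential it supports and match it against a $\langle P^{-N_{i+1}}_{-N_i}\rangle$-Toda bracket through Theorem~\ref{3.8}; each such match raises the $HF_2$-Adams filtration of the filtered Mahowald invariant while lowering its stem, and I would iterate until the invariant is carried onto $\triangle h_1d_0^2$. Jones' lower bound gives $|M(\eta\mu_{17})|\geq 2\cdot 18=36$, while Theorem~\ref{3.4}(a)—applicable precisely because the $Sq^0$-class is not a permanent cycle—forces $M(\eta\mu_{17})$ into a stem strictly below $46$; together these narrow the possible stems, and Theorem~\ref{3.6} reduces the problem to checking the finitely many generators of the relevant homotopy groups whose Adams filtration is at least the terminal $HF_2$-filtration.

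The decisive and hardest step is the Atiyah--Hirzebruch spectral sequence computation for the appropriate stunted spectrum $P_{-N}$. Using James periodicity together with the attaching-map proposition (the $2\iota,\eta,\nu,\sigma$ attachments), I would list the candidate cells and the classes they carry, and then compute the relevant $d_r$. Just as in Proposition~\ref{5.13}, where hidden $\eta$- and $\nu$-extensions produced AHSS differentials such as $d_2(\{h_0^{10}h_5\}[-15])=\{P^3c_0\}[-17]$ and $d_4(\theta_4[-14])=\{p\}[-18]$, the hidden extensions recorded in Xu's data should translate into AHSS differentials killing every competitor and leaving $\{\triangle h_1d_0^2\}$ on the bottom cell as the unique survivor (cf. the two detection criteria of Corollary~\ref{3.7}). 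The real obstacle is twofold: importing the correct hidden $\eta$- and $\nu$-extensions in the stems in the range $36$--$45$, and verifying that the resulting differentials annihilate all competing generators, so that $\{\triangle h_1d_0^2\}[-N]$ alone remains and hence $\{\triangle h_1d_0^2\}\in M(\eta\mu_{17})$.
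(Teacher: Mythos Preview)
Your proposal rests on a mistaken identification of the algebraic Mahowald invariant. You correctly observe that the Cartan shortcut of Corollary~\ref{3.11} is unavailable because $h_2\cdot h_1Ph_5c_0=0$, but you then draw the wrong conclusion: when $Sq^0$ of a class vanishes, Proposition~2.5 of \cite{MR1241877} tells you nothing, and the algebraic Mahowald invariant lies in a stem \emph{strictly above} the doubled one. Bruner's tables in fact record $M_{alg}(h_1P^2h_1)=\triangle^2h_2^2$ (with indeterminacy $h_0^2h_5i$), which sits in the $54$-stem, not the $46$-stem. Hence the upper bound from Theorem~\ref{3.4}(a) is $|M(\eta\mu_{17})|\leq 53$, and a single application of Theorem~\ref{3.8} via $d_2(\triangle^2h_2^2)=h_0^2PM$ and $\langle P^{-36}_{-37}\rangle(h_0PM)=h_0^2PM$ yields $h_0PM\in R^{[11]}_{HF_2}(\eta\mu_{17})$ in stem $53$.

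The second gap is the lower bound. Jones' inequality gives only $|M(\eta\mu_{17})|\geq 36$; combined with your (incorrect) upper bound of $45$ this produces a window that does not even contain the answer, which lies in stem $53$. The paper instead invokes the multiplicative Cartan formula of Theorem~\ref{3.12}: from $\nu\in M(\eta)$ and $\eta\{Ph_5c_0\}\in M(\mu_{17})$ (stem $48$) together with $\nu\cdot\eta\{Ph_5c_0\}=0$, part~(b) forces $|M(\eta\mu_{17})|>51$, i.e.\ $\geq 52$. With the window narrowed to stems $52$--$53$ and Adams filtration exceeding $11$, one checks that $h_0PM$ is killed by $d_3(h_5i)=h_0PM$ and that $\{\triangle h_1d_0^2\}$ is the \emph{unique} surviving candidate, so no AHSS computation is needed at all. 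Your proposed AHSS analysis over stems $36$--$45$ is thus both aimed at the wrong range and unnecessary once the correct bounds are established.
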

\begin{proof}
 The element $\eta\mu_{17}$ is detected by $h_1P^2h_1$ in the ASS, and $\triangle^2h_2^2\in M_{alg}(h_1P^2h_1)$ with indeterminacy $h_0^2h_5i$. By Theorem \ref{3.9} we know $\triangle^2h_2^2\in R^{[10]}_{HF_2}(\eta\mu_{17})$. We have the Adams differential $d_2(\triangle^2h_2^2)=h_0^2PM$, and we have $\langle P^{-36}_{-37}\rangle h_0PM=h_0^2PM$ by Theorem \ref{3.8}, so we know $h_0PM\in R^{[11]}_{HF_2}(\eta\mu_{17})$ by Theorem \ref{3.8}. By Theorem \ref{3.4}(a) we know $|M(\eta\mu_{17})|\leq53$. Since 
 \[\eta\{Ph_5c_0\}\in M(\mu_{17}), \nu\in M(\eta) \;\mathrm{and}\; \mu\eta\{Ph_5c_0\}=0,\] 
we know $|M(\eta\mu_{17})|\geq52$ by Theorem \ref{3.12}. So by Theorem \ref{3.6} it suffices to check the generators of $\pi_{52}$ and $\pi_{53}$ with filtrations greater than $11$. 
 
Since there is only one element $\{\triangle h_1d_0^2\}$ satisfying these conditions and $h_0PM$ is killed through the $d_3$ Adams differential\[
d_3(h_5i)=h_0PM,\] we deduce that $\{\triangle h_1d_0^2\}\in M(\eta\mu_{17})$.
\end{proof}

\begin{proposition}\label{5.15}
$\{P^4h_0^2i\}\in M(2\{P^2h_2\})$ and $\{P^6c_0\}\in M(4\{P^2h_2\})$
\end{proposition}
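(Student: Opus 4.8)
The plan is to handle the two assertions in parallel, following the route of Propositions \ref{5.13} and \ref{5.14}: start from Bruner's algebraic data, pass to an $HF_2$-filtered Mahowald invariant, pin down the target stem together with a filtration threshold, and then run the Atiyah--Hirzebruch spectral sequence of the relevant stunted projective spectrum to isolate a single surviving candidate.

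First I would record the detecting classes and the algebraic invariants. Both $2\{P^2h_2\}$ and $4\{P^2h_2\}$ lie in the $19$-stem; the former is detected by $h_0P^2h_2$ (Adams filtration $10$) and the latter by $h_0^2P^2h_2$ (filtration $11$). Corollary \ref{3.11} together with Bruner's computation \cite{MR1642887} supplies $M_{alg}(h_0P^2h_2)=P^4h_0^2i$ (stem $55$) and $M_{alg}(h_0^2P^2h_2)=P^6c_0$ (stem $56$), and Theorem \ref{3.9} then gives $P^4h_0^2i\in R^{[10]}_{HF_2}(2\{P^2h_2\})$ and $P^6c_0\in R^{[11]}_{HF_2}(4\{P^2h_2\})$. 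Since $P^4h_0^2i$ and $P^6c_0$ are $v_1$-periodic, image-of-$J$-type classes, I expect them to be permanent cycles, so no Adams differential needs to be absorbed; should one nonetheless appear, I would shift filtration by an attaching-map Toda bracket exactly as in the $\langle P^{-18}_{-19}\rangle$ and $\langle P^{-36}_{-37}\rangle$ steps of the earlier proofs, invoking Theorem \ref{3.8}.

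Next I would fix the stem and filtration. Because the source lies in the $19$-stem and the candidate targets lie in stems $55$ and $56$, the relevant stunted spectra are $P_{-37}$ and $P_{-38}$, with bottom cells $S^{-37}$ and $S^{-38}$. The upper bounds $|M(2\{P^2h_2\})|\le 55$ and $|M(4\{P^2h_2\})|\le 56$ follow from Theorem \ref{3.4}(a), while the matching lower bounds follow once the filtered invariants $P^4h_0^2i$ and $P^6c_0$ are seen to be permanent cycles whose images survive in the stunted spectra, placing us in case (c) of Theorem \ref{3.4}; where this is delicate, the Cartan formula of Theorem \ref{3.12} provides an independent lower bound, as in the derivation of $|M(\eta\mu_{17})|\ge 52$ in Proposition \ref{5.14}. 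With the stem fixed and the filtration threshold in hand, Theorem \ref{3.6} reduces the problem to checking the finitely many generators of $\pi_{55}$ (resp.\ $\pi_{56}$) of Adams filtration exceeding $10$ (resp.\ $11$), and Corollary \ref{3.7} isolates exactly what must be verified: that no higher-filtration class detects the invariant, and that the bottom-cell image of the candidate is nontrivial.

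The final and hardest step is the Atiyah--Hirzebruch bookkeeping in $P_{-37}$ and $P_{-38}$. Using the attaching-map data from the Steenrod-square proposition and James periodicity, together with Xu's \cite{MR4588596} record of Adams differentials and hidden $\eta$-, $\nu$-, and $\sigma$-extensions in the sphere, I would compute the AHSS differentials in the relevant range and show that every candidate other than $\{P^4h_0^2i\}[-37]$ (resp.\ $\{P^6c_0\}[-38]$) either supports or is hit by a differential, leaving a single nontrivial class on the relevant cell. This yields $\{P^4h_0^2i\}\in M(2\{P^2h_2\})$ and $\{P^6c_0\}\in M(4\{P^2h_2\})$. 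I expect this last step to be the main obstacle: at these high stems and filtrations the candidate lists are long, and the argument hinges on correctly translating each hidden extension in the sphere into the corresponding AHSS differential in the stunted projective spectrum.
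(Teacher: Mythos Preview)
Your starting point contains a basic error: the algebraic Mahowald invariant preserves Adams filtration, so $M_{alg}(h_0P^2h_2)$ lies in filtration $10$ and $M_{alg}(h_0^2P^2h_2)$ in filtration $11$. The classes $P^4h_0^2i$ and $P^6c_0$ have filtrations $25$ and $27$, so neither Corollary~\ref{3.11} nor Bruner's tables can produce them as algebraic invariants; they are the \emph{answers}, not the algebraic input. The actual algebraic invariants involved are $h_5Pd_0\in M_{alg}(P^2h_2)$ and $h_1\triangle^2h_1h_3\in M_{alg}(h_0^2P^2h_2)$, and neither survives: the first is hit by $d_3(\triangle_1h_1^2)$, the second supports a $d_2$. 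Consequently Theorem~\ref{3.4}(c) is unavailable, and the route you sketch --- place us in case~(c), then do an AHSS elimination above a fixed filtration threshold --- never gets off the ground. Your ``I expect them to be permanent cycles'' refers to the wrong elements.

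The paper's argument is structurally different and is the idea you are missing. Rather than treating $2\{P^2h_2\}$ and $4\{P^2h_2\}$ in isolation, it couples the three elements $\{P^2h_2\}$, $2\{P^2h_2\}$, $4\{P^2h_2\}$ through Theorem~\ref{3.12} with $M(2)=\eta$. The two algebraic facts above give only a lower bound $|M(\{P^2h_2\})|\geq 53$ and an upper bound $|M(4\{P^2h_2\})|\leq 56$; the Cartan formula then forces
\[
|M(\{P^2h_2\})|+1\ \leq\ |M(2\{P^2h_2\})|\ \leq\ |M(4\{P^2h_2\})|-1,
\]
so at least one of the two gaps is exactly $1$, meaning an $\eta$-extension links two consecutive invariants. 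A single AHSS differential in $P^\infty_{-36}$ (from the hidden $\nu$-extension $\nu\cdot\nu\{Mh_2\}=\eta\{PM\}$) eliminates one candidate $\eta$-extension, leaving only the hidden $\eta$-extension from $\{P^4h_0^2i\}$ to $\{P^6c_0\}$, which identifies both invariants simultaneously. Note that $M(\{P^2h_2\})$ itself is \emph{not} determined by this argument --- it is one of the five elements left open --- so any scheme that computes the three invariants independently would have to succeed where the paper fails.
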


\begin{proof}
 We know that $h_5Pd_0\in M_{alg}(P^2h_2)$ and that $h_5Pd_0$ is killed through the $d_3$ Adams differential\[
d_3(\triangle_1h_1^2)=h_5Pd_0,\] 
so by Theorem \ref{3.4} we know $|M(\{P^2h_2\})|\geq53$. By Theorem \ref{3.12} and that $M(2)=\eta$, we know that $|M(4\{P^2h_2\})|\geq55$. Since $h_1\triangle^2h_1h_3\in M_{alg}(h_0^2P^2h_2)$ and $h_1\triangle^2h_1h_3$ supports the $d_2$ Adams differential, by Theorem \ref{3.4}(a) we know that $|M(4\{P^2h_2\})|\leq56$. 
 
By Theorem \ref{3.12}, we know that\[ |M(\{P^2h_2\})|+1\leq |M(2\{P^2h_2\})|\leq |M(4\{P^2h_2\})|-1,\]
so there is an equality $|M(\{P^2h_2\})|+1=|M(2\{P^2h_2\})|$ or $|M(2\{P^2h_2\})|+1=|M(4\{P^2h_2\})|$. So by Theorem \ref{3.12} there is an $\eta$-extension from $M(\{P^2h_2\})$ to $M(2\{P^2h_2\})$ or from $M(2\{P^2h_2\})$ to $M(4\{P^2h_2\})$. 

In the AHSS of $P^\infty_{-36}$ we have $d_4(\nu\{Mh_2\}[-32])=\eta\{PM\}[-36]$ because there is a hidden $\nu$-extension from $\nu\{Mh_2\}$ to $\eta\{PM\}$, so we know that the only possibility is the hidden $\eta$-extension from $\{P^4h_0^2i\}$ to $\{P^6c_0\}$, which means $\{P^4h_0^2i\}\in M(2\{P^2h_2\})$ and that $\{P^6c_0\}\in M(4\{P^2h_2\})$.
\end{proof}

\begin{proposition}\label{5.16}
$\{\triangle^2h_3^2\}\in M(\rho_{23})$    
\end{proposition}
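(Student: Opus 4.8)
The plan is to follow the template of Propositions \ref{5.13}--\ref{5.15}: feed Bruner's algebraic Mahowald invariant into Behrens' filtered-invariant machinery, trap the target stem, and finish with an AHSS computation in a stunted projective spectrum. The element $\rho_{23}$ generates the image of $J$ in the $23$-stem; by Bruner's tables \cite{MR1642887} one has $\triangle^2h_3^2\in M_{alg}(\rho_{23})$, and since the generator $\triangle$ lies in the $24$-stem this class lies in the $62$-stem. Observe that $\triangle^2h_3^2$ is \emph{not} the $Sq^0$-image of the Adams class detecting $\rho_{23}$ (that image would sit in the $46$-stem), so, unlike the $E_\infty$-survivors handled by Proposition \ref{easyones}, the value here reflects Bruner's full coskeletal computation, and a naive application of Theorem \ref{3.4} need not place $M(\rho_{23})$ in the coset of $M_{alg}(\rho_{23})$. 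This is exactly why the element requires separate treatment.

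First I would verify that $\triangle^2h_3^2$ is a permanent cycle in the ASS (it must be, since the asserted answer is the homotopy class $\{\triangle^2h_3^2\}$ that it detects), and then invoke Theorem \ref{3.9} to place $\triangle^2h_3^2$ in the $HF_2$-filtered Mahowald invariant $R^{[k]}_{HF_2}(\rho_{23})$, with initial filtration $k$ equal to the Adams filtration of $\rho_{23}$. Should the relevant representative support an Adams differential at an intermediate stage, I would instead transport the filtered invariant forward using the Toda-bracket identity of Theorem \ref{3.8}, matching a bracket $\langle P^{-N_{i+1}}_{-N_i}\rangle$ against that differential exactly as $\langle P^{-18}_{-19}\rangle p=h_0p$ was used in Proposition \ref{5.13} and $\langle P^{-36}_{-37}\rangle h_0PM=h_0^2PM$ in Proposition \ref{5.14}.

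Next I would trap the stem of $M(\rho_{23})$. The lower bound is Jones' inequality \cite{MR806016}, $|M(\rho_{23})|\geq 46$. For the upper bound I would apply the Cartan formula, Theorem \ref{3.12}, with $\alpha_1=2$, $\alpha_2=\rho_{23}$ and $M(2)=\eta$: the companion invariant $M(2\rho_{23})=\{\triangle^2h_1h_4\}$ is already known to lie in the $64$-stem (Proposition \ref{easyones}), so once $\eta\cdot\{\triangle^2h_3^2\}$ is seen to have trivial image under the bottom-cell inclusion, part (2) of Theorem \ref{3.12} forces $|M(\rho_{23})|\leq 62$. Since $\triangle^2h_3^2$ is a surviving filtered invariant in the $62$-stem, Theorem \ref{3.4} excludes a drop to a lower stem (its case (a)) and pins $|M(\rho_{23})|=62$. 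Theorem \ref{3.6} and Corollary \ref{3.7} then reduce the whole problem to two finite checks in $\pi_{62}(P^\infty_{-40})$: that no class of $E$-Adams filtration exceeding $k$ detects the invariant, and that the bottom-cell image of the candidate is nonzero.

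The final and hardest step is the AHSS of $P^\infty_{-40}$. As in Propositions \ref{5.13}--\ref{5.15}, the decisive AHSS differentials come from hidden $\eta$- and $\nu$-extensions in the sphere, read through the attaching-map description of $d_r$ recalled in Section 2. Because the $62$-stem is homotopically dense, the substance of the argument is to locate the relevant hidden extensions in the $62$-stem neighbourhood of Xu's charts \cite{MR4588596}, convert each into an AHSS differential that kills a competing generator, and confirm that $\{\triangle^2h_3^2\}[-40]$ is the unique survivor in the appropriate cell — thereby showing we are in case (c) of Theorem \ref{3.4} rather than case (b). I expect this bookkeeping to be the main obstacle: distinguishing $\{\triangle^2h_3^2\}$ from the several other stem-$62$ classes of comparable filtration, and verifying the nontriviality of the bottom-cell image demanded by Corollary \ref{3.7}(2), is exactly where the computation is delicate. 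Once every competitor is annihilated, the surviving class yields $\{\triangle^2h_3^2\}\in M(\rho_{23})$.
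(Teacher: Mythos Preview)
Your identification of the algebraic Mahowald invariant is wrong, and this error propagates through the whole plan. The Adams class detecting $\rho_{23}$ is $h_0^2i+h_1Pd_0$ in filtration~$9$, and Bruner's computation gives $h_0^7h_5^2\in M_{alg}(h_0^2i+h_1Pd_0)$, a class in stem~$62$ and filtration~$9$ that is \emph{hit} by a $d_2$ differential. (For the entries not covered by Proposition~\ref{easyones} the table's $M_{alg}$ column is recording the class that ends up detecting the homotopy answer, not the genuine algebraic invariant; compare the $\eta_4$ row against the proof of Proposition~\ref{5.13}, where $M_{alg}=h_2h_5$, not $\triangle h_1h_3$.) This is exactly why $\rho_{23}$ requires separate treatment: were $\triangle^2h_3^2$ really the algebraic invariant and a permanent cycle, Theorem~\ref{3.4}(c) would already put $M(\rho_{23})$ in its homotopy coset and the element would belong to Proposition~\ref{easyones}. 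Your own premise therefore makes the elaborate AHSS programme you propose unnecessary, a tension you should have noticed.

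With the correct input the paper's argument is short. That $h_0^7h_5^2$ is killed places us in case~(b) or~(d) of Theorem~\ref{3.4}, yielding $|M(\rho_{23})|\geq 62$ directly; Jones' bound $\geq 46$ is irrelevant. For the upper bound, Theorem~\ref{3.12} with $M(2\rho_{23})=\{\triangle^2h_1h_4\}$ in stem~$64$ gives a priori only $|M(\rho_{23})|\leq 63$; the paper then rules out $63$ by observing that no class in $\pi_{63}$ supports an $\eta$-extension to $\{\triangle^2h_1h_4\}$, whereas your formulation tests $\eta\cdot\{\triangle^2h_3^2\}$ and so assumes the conclusion. Finally---and this is the step you miss entirely---once $|M(\rho_{23})|=62$ is pinned, Theorem~\ref{3.9} gives $h_0^7h_5^2\in R^{[9]}_{HF_2}(\rho_{23})$, and Theorem~\ref{3.6} reduces the problem to listing permanent cycles in stem~$62$ of Adams filtration greater than~$9$. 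There is exactly one, $\{\triangle^2h_3^2\}$, and the proof ends. No AHSS differential in $P^\infty_{-40}$ is ever computed; the ``hard bookkeeping'' you anticipate simply does not occur.
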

\begin{proof}
The element $\rho_{23}$ is detected by $h^2_0i+ h_1Pd_0$ in the ASS, and $h_0^7h_5^2\in M_{alg}(h^2_0i+ h_1Pd_0)$. Since $h_0^7h_5^2$ is killed through the $d_2$ Adams differential, we know $|M(\rho_{23})|\geq62$.
We know $|M(\rho_{23})|\leq |M(2\rho_{23})|-1$ by Theorem \ref{3.12}. 

From Proposition \ref{5.15} we know $\{\triangle^2h_1h_4\}\in M(2\rho_{23})$, and there is no $\eta$-extension from 63-stem to $\{\triangle^2h_1h_4\}$, so we can deduce that $|M(\rho_{23})|\leq62$, and that $|M(\rho_{23})|=62$. By Theorem \ref{3.9} we know $h_0^7h_5^2\in R^{[9]}_{HF_2}(\rho_{23})$. By Theorem \ref{3.6} it suffices to check the nontrivial elements of 62-stem with filtration greater than 9. The only possibility is $\{\triangle^2h_3^2\}\in M(\rho_{23})$.

\end{proof}

\begin{proposition}\label{5.17}
 $\nu\bar{\kappa}_2\in M(\nu\bar{\sigma})$   
\end{proposition}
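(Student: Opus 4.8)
The plan is to argue directly at the level of $\nu\bar\sigma$ in the $22$-stem, following the template of Propositions \ref{5.13}--\ref{5.16} rather than invoking the Cartan formula. The obvious shortcut---applying Theorem \ref{3.12} to the factorization $\nu\bar\sigma=\nu\cdot\bar\sigma$ with $M(\nu)=\sigma$---is blocked, since $\bar\sigma$ is exactly one of the five elements whose Mahowald invariant we leave undetermined, so $M(\bar\sigma)$ is not available as an input. The target $\nu\bar\kappa_2$ lies in the $47$-stem, and since a class in $\pi_{22}(S^0)$ produces a Mahowald invariant in $\pi_{21}(S^{-N})$, the relevant value is $N=26$ because $21+26=47$; thus all of the delicate bookkeeping takes place in $P^{\infty}_{-26}$.

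First I would record the Adams filtration and detecting class of $\nu\bar\sigma$ and quote Bruner's computation \cite{MR1642887} that $h_2g_2\in M_{alg}(\nu\bar\sigma)$. The feature that keeps this proposition out of Proposition \ref{easyones} is that $h_2g_2$ is not $Sq^0$ of the Adams name of $\nu\bar\sigma$ (the naive squaring operation either vanishes or lands in the wrong stem, so Corollary \ref{3.11} does not deliver it), and hence the clean hypotheses of Theorem \ref{3.4} are not transparently in force and must be checked by hand. Since $h_2g_2$ is nonetheless a permanent cycle detecting $\nu\bar\kappa_2$, I would use Theorem \ref{3.9} to place $h_2g_2$ in the $HF_2$-filtered Mahowald invariant $R^{[k]}_{HF_2}(\nu\bar\sigma)$, where $k$ is the Adams filtration of $\nu\bar\sigma$; as $h_2g_2$ supports no Adams differential, Theorem \ref{3.4} then fixes the stem of $M(\nu\bar\sigma)$ at $47$ and singles out $\nu\bar\kappa_2$ as the only candidate in the surviving filtration.

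It remains to rule out the competing possibilities of Theorem \ref{3.4}: that diagram \ref{1} fails, that the relevant class is killed by a differential (pushing the answer to higher Adams filtration in the same stem), or that the bottom-cell map is trivial. For this I would invoke Theorem \ref{3.6} and Corollary \ref{3.7}, checking two things: that no class of Adams filtration greater than $k$ in $\pi_{21}(P^{\infty}_{-26})$ can detect the invariant, and that the bottom-cell inclusion $\pi_{21}(S^{-26})=\pi_{47}(S^0)\to\pi_{21}(P^{\infty}_{-26})$ carries $\nu\bar\kappa_2$ to a nonzero class. Concretely I would run the AHSS of $P^{\infty}_{-26}$ through this range, reading the $d_r$-differentials off the attaching-map description of stunted projective spectra (the Steenrod-square data together with James periodicity recalled above), and eliminate each higher-filtration competitor by exhibiting an AHSS differential coming from a hidden $\eta$- or $\nu$-extension in the relevant stems around $47$, exactly as in the proofs of Propositions \ref{5.13} and \ref{5.15}.

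The hard part will be this last AHSS computation in $P^{\infty}_{-26}$: pinning down which hidden extensions among the $44$--$47$ stems of the sphere produce the differentials that kill every competitor other than $\nu\bar\kappa_2$, and verifying the bottom-cell nontriviality. Because the Cartan formula is unavailable here, there is no independent lower bound on the stem of $M(\nu\bar\sigma)$; the entire conclusion rests on $h_2g_2$ surviving to $E_\infty$ and on the AHSS elimination, so the argument succeeds or fails precisely on that bookkeeping.
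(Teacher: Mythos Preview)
Your starting point is off: Bruner's algebraic Mahowald invariant of the detecting class $h_2c_1$ is $h_3c_2$ (indeed $Sq^0(h_2c_1)=h_3c_2$, as you yourself note), \emph{not} $h_2g_2$. The entry $h_2g_2$ in Table~\ref{table} is the Adams name of the eventual homotopy answer, not the output of Bruner's computation. This matters because $h_3c_2$ is \emph{not} a permanent cycle: there is an Adams differential $d_2(h_3c_2)=h_0h_2g_2$. Consequently Theorem~\ref{3.4}(a) applies and yields only the inequality $|M(\nu\bar\sigma)|<48$, not equality with $47$ as you claim. The class $h_2g_2$ enters the picture only after one invokes Theorem~\ref{3.8} to pass from $R^{[4]}_{HF_2}(\nu\bar\sigma)\ni h_3c_2$ to $R^{[5]}_{HF_2}(\nu\bar\sigma)\ni h_2g_2$ via that $d_2$ and the Toda bracket $\langle P^{-26}_{-27}\rangle$; Theorem~\ref{3.9} cannot place $h_2g_2$ there directly.

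Your assertion that ``the Cartan formula is unavailable here'' is also mistaken, and this is where the real lower bound comes from. One does not need the exact value of $M(\bar\sigma)$: Jones' bound gives $|M(\bar\sigma)|\geq 2\cdot 19=38$, and Theorem~\ref{3.12} then forces $|M(\nu\bar\sigma)|\geq |M(\nu)|+|M(\bar\sigma)|\geq 7+38=45$. The paper uses Theorem~\ref{3.12} a second time, more delicately, to eliminate stem $45$: if $|M(\nu\bar\sigma)|=45$ then equality holds throughout, so $\sigma\cdot M(\bar\sigma)\subseteq M(\nu\bar\sigma)$; one then constrains $M(\bar\sigma)$ via its own filtered invariant ($c_2\in M_{alg}(c_1)$, $d_2(c_2)=h_0f_1$, hence $f_1\in R^{[4]}_{HF_2}(\bar\sigma)$) and an AHSS differential in $P_{-24}$ to reach a contradiction. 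Stem $46$ is then killed by four explicit AHSS differentials in $P^\infty_{-25}$. Without the Cartan input your plan leaves stems $44$ and $45$ unaccounted for, and the AHSS bookkeeping you describe in $P^\infty_{-26}$ addresses only the survival of $\nu\bar\kappa_2$, not the elimination of competitors in lower stems.
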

\begin{proof}
The element $\nu\bar{\sigma}$ is detected by $h_2c_1$ in the ASS, and $h_3c_2\in M_{alg}(h_2c_1)$. By Theorem \ref{3.9} we know $h_3c_2\in R^{[4]}_{HF_2}(\nu\bar{\sigma})$. We have the Adams differential $d_2(h_3c_2)=h_0h_2g_2$, and by Theorem \ref{3.8} we have $\langle P^{-26}_{-27}\rangle h_3g_2=h_0h_3g_2$, so by Theorem \ref{3.8} we know $h_2g_2\in R^{[5]}_{HF_2}(\nu\bar{\sigma})$. By $|\nu\bar{\sigma}|=22$ and Theorem \ref{3.4}(a), we deduce $44\leq|M(\nu\bar{\sigma})|\leq 47$. By Theorem \ref{3.6} it suffices to check the generators of $\pi_{44}$,$\pi_{45}$ and $\pi_{46}$ with filtrations greater than 5.

Since $|M(\bar{\sigma})|\geq 38$, by Theorem \ref{3.12} we have \[M(\nu\bar{\sigma})\geq 38+|M(\nu)|=45.\]
In the AHSS of $P^\infty_{-25}$, we have Atiyah-Hirzebruch differentials \[\begin{split}
   & d(\{h_5d_0\}[-23])=\eta\{h_5d_0\}[-25],d(\eta\bar{\kappa}_2[-23])=\eta^2\bar{\kappa}_2[-25],\\
   & d(\theta_{4,5}[-23])=\{Mh_1\}[-25]\; \mathrm{and}\; d(\{\triangle h_1g\}[-23])=\{d_0l\}[-25].
\end{split} \] 
They are all candidates of 46-stem, so $|M(\nu\bar{\sigma})|\neq 46$. 

If $|M(\nu\bar{\sigma})|=45$, we have \[M(\bar{\sigma})+M(\nu)\geq 38+7=45,\] so we have 
$M(\nu)M(\bar{\sigma})\subseteq M(\nu\bar{\sigma})$. Similarly, we know $c_2\in M_{alg}(c_1)$, so $f_1\in R^{[4]}_{HF_2}(\bar{\sigma})$. By Theorem \ref{3.6}, the only possibility of $M(\bar{\sigma}) $ with a $\sigma$-extension is $\{h_0^3h_3h_5\}$. However, in the AHSS of $P_{-24}$ we have an Atiyah-Hirzebruch differential 
\[d(\{h_0^3h_3h_5\}[-16])=8\theta_{4,5}[-24],\] so $\sigma\{h_0^3h_3h_5\}\notin M(\nu\bar{\sigma})$, which is a contradiction.

Therefore, all candidates of 44-stem to 46-stem are impossible and we deduce $\nu\bar{\kappa}_2\in M(\nu\bar{\sigma})$.
\end{proof}

\begin{remark}
By these methods, there are still some elements up to 26-stem whose homotopy Mahowald invariants are unknown, although some possibilities can be excluded. The difficulty of the computations is to find the top filtered Mahowald invariant.
\end{remark}

\section*{Acknowledgments}
We thank Zhouli Xu for providing the opportunity to conduct this research and for his supervision. We are deeply indebted to Shangjie Zhang for his mentorship and many illuminating discussions on stable homotopy groups. We thank Mark Behrens for his guidance, and Robert Bruner and Dan Isaksen for identifying errors in an earlier draft and offering suggestions that clarified our results.
\bibliographystyle{alpha}
\bibliography{reference}

\end{document}